\theoremstyle{plain}
\newtheorem{theorem}{Theorem}[section]
\newtheorem{lemma}[theorem]{Lemma}
\newtheorem{corollary}[theorem]{Corollary}
\newtheorem{proposition}[theorem]{Proposition}
\theoremstyle{definition}
\newtheorem{definition}[theorem]{Definition}
\newtheorem{example}[theorem]{Example}
\theoremstyle{remark}
\newtheorem{remark}{Remark}
\newcommand{\Z}{\mathbb Z}
\begin{document}

	\title{Reversibility and  symmetry of affine toral automorphisms}
	
    \author[K. Banerjee]{Kuntal Banerjee}
   
    \address{Department of Mathematics, Presidency University, 86/1, College Street, Kolkata - 700073, West Bengal, India. 
	}
    \email{kbanerjee.maths@presiuniv.ac.in}

     \author[A. Bhattacharyya]{Anubrato Bhattacharyya} 
    \address{Department of Mathematics, Presidency University, 86/1, College Street, Kolkata - 700073, West Bengal, India. 
	}
    \email{anubrato02@gmail.com}
    
\author[K. Gongopadhyay]{Krishnendu Gongopadhyay} 

\address{Indian Institute of Science Education and Research (IISER) Mohali, Knowledge City,  Sector 81, S.A.S. Nagar 140306, Punjab, India}
\email{krishnendu@iisermohali.ac.in}

    \author[S. Mondal]{Subhamoy Mondal}
    \address{Department of Mathematics, The Heritage college,  West Chowbaga Rd, Anandapur, Kolkata - 700107, West Bengal, India. 
	}
        \email{subhamoy.mondal@thc.edu.in,~pranaymondal.math@gmail.com}
	
\begin{abstract}
We study reversibility and strong reversibility of affine automorphisms of the two-torus $\mathbb T^{2}$, written as
$f_{A,\bar{a}}(\bar{x})=A\bar{x}+\bar{a} \ (\mathrm{mod}\ \mathbb{Z}^2)$.
We derive explicit criteria for the reversibility of such maps in terms of the matrix $A$ and the translation  $\bar{a}$. If $1$ is not an eigenvalue of $A$, reversibility of the affine map coincides with reversibility of $A$.
When $1$ is an eigenvalue, additional arithmetic obstructions appear.
We also provide a simple geometric condition, based on Pick's Theorem, that guarantees the existence of fixed points, along with a description of the dynamics of affine toral automorphisms. We also  compute the entropy and characterize when conjugacy classes in the affine group are finite or uncountable.

\end{abstract}

 \subjclass[2020]{Primary 	11F06; Secondary: 15B36,  20E45, 37B05}

\keywords{Reversibility, conjugacy classes,  modular group, affine torus, fixed points, lattice-points}
\date{\today}
	\maketitle

\section{Introduction}\label{intro}

Reversibility phenomena play a central role in the study of dynamical systems and geometric group actions. The concept of reversibility originates from physical problems and classical dynamics, including systems such as the $n$-body problem, dynamics of billiards etc. The concept can be traced back to the volume of Fricke and Kelin \cite{FK}  and the work of Birkhoff on the three-body problem \cite{GB}. These notions arise naturally in the study of symmetries of dynamical systems, particularly time-reversal symmetries, and have been extensively explored in settings such as one-dimensional dynamics, complex-analytic maps, and geometric transformations. Reversibility has been investigated from a variety of perspectives in many classes of groups,  see, for example, \cite{ATW, BR, dc, Dj, dhs, DGL, El, GM,  OFa, Sa, Sh1, ST,  Wo}, or the monograph \cite{IS}. 

An element $ g$  of a group $ \mathcal G$  is called \emph{reversible} or \emph{real} if it is conjugate in $\mathcal G$  to its inverse; equivalently,
there exists $ h\in \mathcal G$  such that $hgh^{-1}=g^{-1}$. 
If the conjugating element $h$  can be chosen as an involution, i.e.,~$h^2=e~(\text{the identity element of}~\mathcal{G})$, then $ g$ 
is called \emph{strongly reversible} or \emph{strongly real}. Equivalently, strongly reversible elements can be expressed as the product of two involutions. In arithmetic contexts, the term \emph{reciprocal} has also been used for strongly reversible elements, see \cite{bv, dg, Sa}. 

Let $G:={\rm Aff}(\mathbb T^{2})$ denote the group of affine toral automorphisms of the $2$-torus $\mathbb T^{2}$. That is, 
\[ G:= {\rm Aff}(\mathbb T^{2})
   = GL(2,\mathbb Z)\ltimes \mathbb T^{2},\qquad
(A,\overline{a})\cdot(\bar{x})=A\bar{x}+\overline{a} \pmod{\mathbb Z^{2}}. 
\]
This group provides one of the simplest nontrivial examples in which linear, algebraic, and
dynamical aspects interact in a rich way.  The linear part $ A\in GL(2,\mathbb Z)$ 
governs the global qualitative behaviour of the map while the translation part $ \overline{a}\in\mathbb T^{2}$  accounts for the finer geometric structure.  

Topological conjugacy and reversibility in the group of linear automorphisms of the $2$-torus $\mathbb{T}^{2}$ have been explored in \cite{ATW, BR2}, see \cite{IS} for an exposition. Recall that the $2$-torus can be identified as  $\mathbb{T}^2 = \mathbb{R}^2 / \mathbb{Z}^2$. Every toral automorphism is induced by a linear map of $\mathbb{R}^2$ preserving the lattice $\mathbb{Z}^2$, and hence corresponds to a matrix in $\mathrm{GL}(2,\mathbb{Z})$. The induced action $\bar{x} + \mathbb{Z}^2 \mapsto A\bar{x} + \mathbb{Z}^2$ defines a diffeomorphism of $\mathbb{T}^2$, yielding a natural identification of the group of toral automorphisms with $\mathrm{GL}(2,\mathbb{Z})$. 

\medskip In this paper, we address analogous questions for affine $2$-toral automorphisms. Baake and Roberts~\cite{BR2} considered this problem in the context of reversing symmetries of affine toral automorphisms. A systematic classification, however, is still lacking, as the affine case presents additional subtleties. In particular, as we shall see, the (strong) reversibility of $(A,\overline{a})$ need not coincide with that of its linear part, and depends crucially on whether $1$ is an eigenvalue of $A$.

\medskip  The purpose of this paper is twofold.  First, we give a systematic description
of reversibility and strong reversibility in the affine group
$ GL(2,\mathbb Z)\ltimes\mathbb T^{2}$ .  We identify the precise algebraic
obstructions that come from the translation component and express them in explicit
linear congruence conditions.  In doing so, we show that the behavior of reversibility in the affine setting depends according to the eigenspace structure of $ A$, with a particularly transparent description in the hyperbolic case.

 Second, we clarify several geometric aspects of affine toral dynamics that
interact with reversibility.  In particular, we establish a sharp fixed-point
criterion for the affine map $f_{A,\overline{a}}: \bar{x}\mapsto A\bar{x}+\overline{a}~(\text{mod}~\mathbb{Z}^2)$, based on Pick's Theorem \cite{GP} and a
detailed analysis of the lattice geometry of the image of the fundamental square under $ A-I$.  While the existence of a fixed point for $f_{A,\overline{a}}(\bar{x})=A\bar{x}+\overline{a}~(\text{mod}~\mathbb{Z}^2)$ follows directly from the invertibility of $A-I$, this topological argument reveals neither its location, the number of distinct lifts, nor the influence of the geometry of $A$ on these features. The Pick--type criterion in Theorem~\ref{fixed-point} serves a different and complementary purpose. The theorem asserts the existence of a fixed point by explicitly ensuring that the parallelogram $(A-I)[0,1)^2$ contains an interior lattice point.  This avoids topological machinery, works directly with the arithmetic structure of $A$, and moreover yields quantitative information, such as bounds on the number of
possible lifts of a fixed point. It provides an accessible geometric tool that is useful when one seeks explicit control over fixed points rather than merely their abstract existence.

\medskip We offer a $\gcd$-based boundary lattice count which  gives a fixed point criterion as follows. 

For an element $A= \begin{pmatrix}
a & b\\ c& d    
\end{pmatrix}$ in $GL(2, \mathbb Z)$ without an eigenvalue $1$, the affine map $\bar{x} \mapsto A\bar{x} + \overline{a}\pmod{\mathbb{Z}^2}$
has a fixed point if 
$$\det(A-I) \ge \gcd(a-1,c) + \gcd(b,d-1).$$

In addition to this, we describe the structure of conjugacy and similarity classes in
$\mathrm{GL}(2,\mathbb{Z}) \ltimes \mathbb{T}^{2}$. If $\det(A-I) \neq 0$, each similarity class contains 
only finitely many affine conjugacy classes; by contrast, when $1$ is an eigenvalue 
of $A$, a similarity class contains uncountably many.

\medskip The main  results of the paper can be summarized as follows:
\begin{itemize}

\medskip \item We establish a  geometric fixed-point criterion for affine toral maps using Pick’s theorem and explicit lattice-point counts, cf. Theorem~\ref{fixed-point}. Moreover, we show that these maps have the same topological entropy as the corresponding induced toral automorphisms, cf. Theorem~\ref{top entrpy}.

  \medskip \item We describe in detail the orbit structure of the translation components under the action of $GL(2,\Z)$, and determine the number of conjugacy classes contained in a given similarity class, cf.  Theorem~\ref{orbit counting}.

\medskip \item We give explicit necessary and sufficient conditions for reversibility and
      strong reversibility of affine toral maps $ (A,\overline{a})$, depending on the
      conjugacy type of $ A$, the solvability of linear congruences involving
      $ A,~ \overline{a}$ and possible reversing involutions, cf. Theorem~\ref{revtoraut}. 

    \end{itemize}

From this work, it is observed that the arithmetic structure of $\mathrm{GL}(2,\mathbb{Z})$, 
the linear action of $A$ on the torus, and the position of~$\overline{a}$ in $\mathbb{T}^2$ 
together give rise to interesting dynamics. Despite its apparent complexity, all 
calculations can be carried out using basic tools. From a dynamical viewpoint, the reversibility conditions for affine toral automorphisms
can be interpreted in terms of a linear cohomological equation over the base toral
automorphism, with reversibility corresponding to the absence of a cohomological
obstruction; this perspective is discussed briefly at the end of the paper, cf. Section~\ref{coho}. 
Hopefully, the methods introduced 
here will be useful for studying reversible dynamics on higher-dimensional tori and their 
associated (affine) groups.

In summary, this paper shows that affine toral automorphisms display new arithmetic and cohomological obstructions to reversibility, new geometric fixed-point criteria, and an interesting finite versus uncountable conjugacy dichotomy, depending on 1 is not an eigenvalue or otherwise,  that does not occur in the linear setting.

\subsection*{Structure of the paper}

Following the introduction, Section~\ref{prel} and Section~\ref{glz} are mostly preparatory. In Section~\ref{prel}, we recall preliminary notions. In Section~\ref{glz}, We recall the reversible and strongly reversible elements of $GL(2,\mathbb{Z})$ and $PGL(2,\mathbb{Z})$. 

Section~\ref{fp} establishes a fixed-point criterion for affine toral automorphisms via Pick’s theorem and also provides some other dynamical properties. In Section~\ref{number}, we determine the number of conjugacy classes within a given similarity class of the group $GL(2,\mathbb{Z})\ltimes\mathbb{T}^2.$ Finally, Section~\ref{affine}  provides a necessary and sufficient criterion for reversibility and strong reversibility of the affine toral automorphism $f_{A,\overline{a}}$.

\textbf{Notations.} For any matrix \(A\), we denote by \(\operatorname{tr}(A)\) its trace and by \(\det(A)\) its determinant. The image of the linear map induced by \(A\) is denoted by \(\operatorname{Im}(A)\). We write \(M(2,\mathbb{Z})\) for the group of all \(2\times 2\) matrices with integer entries and
\[
GL(2,\mathbb{Z})=\{A\in M(2,\mathbb{Z}) : \det(A)=\pm 1\}.
\]
The groups \(GL(2,\mathbb{Q})\) and \(PGL(2,\mathbb{Z})\) have their usual meaning.

\section{Preliminaries}\label{prel}
Let $\mathbb{T}^2=\mathbb{R}^2/\mathbb{Z}^2$ be the $2$-torus, viewed as an abelian group under addition. The group of affine toral automorphisms of $\mathbb{T}^2$ can be identified with the semidirect product $GL(2,\mathbb Z)\ltimes\mathbb T^{2}$. 

Throughout this paper, the symbol $G$ will refer to the semidirect product $GL(2,\mathbb{Z})\ltimes\mathbb{T}^2.$

The group operation is defined as follows:
\[(A,\overline{a})(B,\overline{b})=(AB,A\overline{b}+\overline{a}~(\text{mod}~\mathbb{Z}^2)), ~\text{for all}~(A,\overline{a}),(B,\overline{b})\in G.\]

Then the inverse of an element $(A,\overline{a})\in G$ is $(A^{-1},-A^{-1}\overline{a} ~(\text{mod}~\mathbb{Z}^2))$. 

\subsection{Affine reversibility}

We work in the affine semidirect product $G$, where an element \((A,\overline{a})\in G\) acts on the torus \(\mathbb{T}^2\)
by the affine map
$$
f_{A,\overline{a}} : \mathbb{T}^2 \longrightarrow \mathbb{T}^2,\qquad
f_{A,\overline{a}}(\bar{x})=A\bar{x}+\overline{a} ~(\text{mod}~\mathbb{Z}^2).
$$

Each element $(A,\overline{a})$ of $G$ will induce a dynamical system $(f_{A,\overline{a}},\mathbb{T}^2).$ The study of periodic points of a dynamical system is important because these are the simplest orbit structures we can have. The number of periodic points can also be used to determine the topological entropy of a dynamical system, which is the measure of the exponential growth rate of the number of distinguishable orbits, see \cite{BS}. 

A point $x$ is called a fixed point of a map $f$ if it satisfies $f(x)=x$. A periodic point is essentially a fixed point of some higher iterate of the map. Here we are interested only in the study of fixed points of the affine map $f_{A,\overline{a}} $ in a topological and geometrical viewpoint. In this case, we apply Pick’s Theorem \cite{GP}.

\begin{theorem}[Pick's Theorem]
Let the vertices of a polygon $S$ have all its co-ordinates as integers. Then the area of $S$ is given by \[\text{Area}(S)=k_1+\frac{k_2}{2}-1,\] where $k_1, k_2$ denote respectively the number of interior and boundary (including both the vertices and points along the sides) points with integer co-ordinates.     
\end{theorem}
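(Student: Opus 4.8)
The plan is to show that the functional $P(S) := k_1 + \tfrac{k_2}{2} - 1$ agrees with $\mathrm{Area}(S)$ for every simple lattice polygon by reducing the general case to that of \emph{primitive} lattice triangles — lattice triangles whose only lattice points are their three vertices — and then reassembling. There are two essentially independent ingredients: a decomposition/additivity step that cuts $S$ into such triangles, and a normalization step that pins down the value of each piece.

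First I would record the \emph{additivity} of $P$. Suppose a lattice polygon $S$ is split by a segment between two of its lattice points into two lattice polygons $S_1, S_2$ with disjoint interiors. A direct bookkeeping of lattice points gives $P(S) = P(S_1) + P(S_2)$: if the shared edge carries $m$ lattice points, then the $m-2$ interior ones of that edge are counted on the boundary of both $S_i$ but become interior points of $S$, while the two endpoints remain on $\partial S$; tallying these contributions makes the ``$-1$'' terms and the boundary corrections cancel exactly, matching the additivity of area. Since any simple lattice polygon can be triangulated using only its own lattice points (triangulate into lattice triangles and then subdivide any triangle that still contains an extra lattice point), iterating additivity reduces the theorem to the single claim $P(T) = \mathrm{Area}(T)$ for a primitive triangle $T$.

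The crux is the \emph{primitive triangle lemma}: a lattice triangle $T$ with no lattice points beyond its three vertices has area exactly $\tfrac12$, for which $P(T) = 0 + \tfrac32 - 1 = \tfrac12$ indeed matches. Placing one vertex at the origin, write the two emanating edge vectors as $u, v \in \mathbb{Z}^2$ and form the parallelogram they span; the $180^\circ$ rotation $x \mapsto u+v-x$ about its center maps $\mathbb{Z}^2$ to itself and interchanges $T$ with its complementary triangle, so primitivity of $T$ forces the half-open parallelogram to contain no lattice point other than the origin. The number of points of $\mathbb{Z}^2$ in that fundamental parallelogram equals the index $[\,\mathbb{Z}^2 : \mathbb{Z} u + \mathbb{Z} v\,] = |\det(u,v)|$, whence $|\det(u,v)| = 1$ and $\mathrm{Area}(T) = \tfrac12|\det(u,v)| = \tfrac12$.

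Finally I would assemble the pieces. Once $S$ is triangulated into $T$ primitive triangles using all $k_1 + k_2$ lattice points as vertices, either additivity gives $\mathrm{Area}(S) = \tfrac{T}{2}$ directly, or — more transparently — Euler's formula $V - E + F = 2$ for the resulting planar graph, combined with the edge count $2E = 3T + k_2$ (each triangle has three edges, and the outer face contributes the $k_2$ boundary edges), yields $T = 2k_1 + k_2 - 2$, so $\mathrm{Area}(S) = \tfrac{T}{2} = k_1 + \tfrac{k_2}{2} - 1$. The main obstacle is the primitive triangle lemma together with the existence of a primitive triangulation: the area computation is the arithmetic heart of the argument, while the triangulation step, though intuitively clear, requires a careful induction on the number of enclosed lattice points to guarantee that the subdivision terminates with genuinely primitive cells.
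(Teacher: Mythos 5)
Your proposal is correct, and it takes a genuinely different route from the paper's. The paper does not prove Pick's Theorem by triangulation at all: the justification it offers (in the Remark following the statement, together with Figure~1) is an informal coloring-and-pairing argument in the spirit of Pick's original note and of Trainin's elementary proof. There, a unit square is centered at each lattice point and classified by how it meets the polygon: squares wholly inside contribute a full unit, squares whose midpoint lies exactly on an edge contribute $\tfrac{1}{2}$, and each boundary-crossing square with midpoint inside is paired with a crossing square with midpoint outside so that the pair jointly contributes one unit of area; summing these contributions over the $k_1$ interior and $k_2$ boundary lattice points recovers $k_1+\tfrac{k_2}{2}-1$. That picture explains at a glance \emph{why} interior points count $1$ and boundary points count $\tfrac{1}{2}$, but as presented in the paper it is an illustration rather than a complete proof (the interior--exterior pairing along the boundary and the degenerate midpoint configurations are asserted pictorially, not verified). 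Your argument is the standard rigorous proof, and the details check out: with $m$ lattice points on the cutting chord one gets $k_1(S)=k_1(S_1)+k_1(S_2)+(m-2)$ and $k_2(S_1)+k_2(S_2)=k_2(S)+2m-2$, whence $P(S_1)+P(S_2)=P(S)$; the primitive-triangle lemma via the rotation $x\mapsto u+v-x$ and the index identity $\left[\mathbb{Z}^2:\mathbb{Z}u+\mathbb{Z}v\right]=|\det(u,v)|$ is sound; and the Euler count $V-E+F=2$ with $2E=3T+k_2$ correctly yields $T=2k_1+k_2-2$, hence $\operatorname{Area}(S)=\tfrac{T}{2}=k_1+\tfrac{k_2}{2}-1$. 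What each approach buys: the paper's square-assignment heuristic is visually immediate and motivates the coefficients directly, while yours is modular and fully rigorous, isolating the combinatorics (additivity, Euler) from the arithmetic heart (lattice index equals determinant), and it generalizes to other lattice-point results. Two small points you should make explicit: the theorem requires the polygon to be \emph{simple} (both you and the paper assume this tacitly), and the edge count $2E=3T+k_2$ uses that the triangulation has all $k_1+k_2$ lattice points as vertices, which is exactly what your subdivision induction guarantees.
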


\begin{remark}

The proof of Pick’s theorem is illustrated geometrically in {Figure~\ref{fig: grid}. At the center of each unit square there is a black lattice point, and a polygon is drawn by joining seven selected lattice points. The resulting closed polygonal curve is shown in blue. Each unit square is coloured according to its geometric relation with the polygon.

   \begin{figure}[!htbp]
    \centering
    \includegraphics[scale=.48]{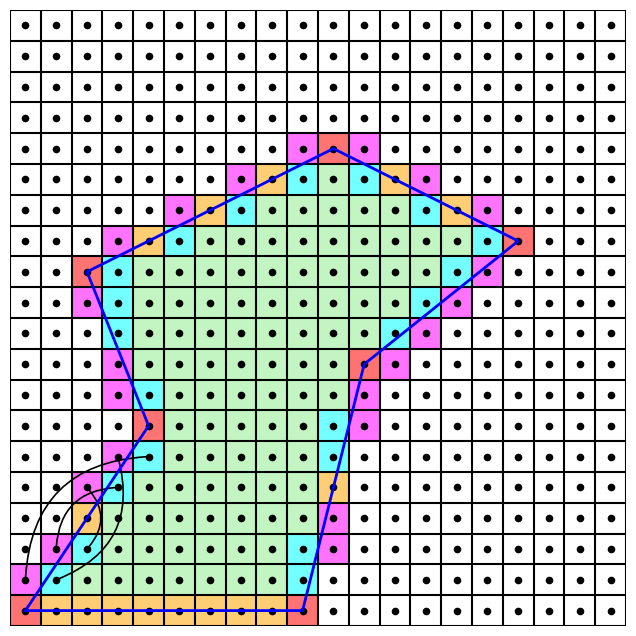}
    \caption{\small Illustration of Pick's Theorem}
    \label{fig: grid}
\end{figure}
\begin{itemize}

    \item squares lying completely inside the polygon are shaded green;

    \item squares intersecting the boundary but whose midpoint lies inside the polygon are shaded light blue;
      
    \item squares intersecting the boundary but whose midpoint lies outside the polygon are shaded magenta;
    \item squares whose midpoint lies exactly on an edge of the polygon are coloured orange;

    \item squares whose midpoint is a polygon vertex are shaded red.
    
    \end{itemize}

 Every orange square contributes $\frac{1}{2}$ unit area. Each light blue square can be paired with one magenta square, in such a way that the pair in combination contributes one unit to the area of the polygon. As an example, four pairs of  squares are  linked using  black arcs in the figure along one boundary segment of the polygon.  
Each arc joins a magenta square to a light–blue square, illustrating this
interior--exterior pairing  contributing one unit area to the polygon.
}
\end{remark}

 Let us now recall the notion of elliptic, parabolic and hyperbolic matrices.  

\begin{definition}
For $A \in GL(2, \mathbb Z)$, let  
$$c(A)=\frac{{\rm tr}^2(A)}{\det A}.$$
    An element $A\in GL(2,\mathbb{Z})$ is called elliptic if $|c(A)|<4$, and parabolic if $|c(A)|= 4$. For all other values of 
 $c(A)$, the corresponding element $A$ is called hyperbolic.
\end{definition}
A non-hyperbolic matrix $A$ is, by definition, either elliptic or parabolic.

\section{Reversibility in $GL(2,\mathbb Z)$ and $PGL(2,\mathbb Z)$}\label{glz}

In this section we briefly review the classification of reversible and strongly reversible elements in \(GL(2,\mathbb{Z})\) and \(PGL(2,\mathbb{Z})\). Essentially all of the results presented here are well known and can be found, in some form, in the book \cite{IS}; they are included solely to fix notation and for later reference.

\begin{lemma}[{\cite[Lemma~3.2]{IS}}]\label{GL2Z-involutions} Every involution in $GL(2,\mathbb Z)$ is conjugate to one of
\[
-I_2,\qquad 
\begin{pmatrix}0&1\\1&0\end{pmatrix},\qquad
\begin{pmatrix}1&0\\0&-1\end{pmatrix}.
\]
\end{lemma}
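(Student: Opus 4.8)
The plan is to exploit the minimal polynomial of an involution together with the difference between $\mathbb{Q}$-conjugacy and $\mathbb{Z}$-conjugacy, the latter being detected by a lattice-index invariant. Let $A\in GL(2,\mathbb{Z})$ satisfy $A^2=I$ with $A\neq I$. Since the minimal polynomial of $A$ divides $(x-1)(x+1)$, the matrix $A$ is diagonalizable over $\mathbb{Q}$ with eigenvalues in $\{+1,-1\}$. I would split into cases according to the eigenvalue multiplicities: eigenvalues $(1,1)$ force $A=I$, which is excluded; eigenvalues $(-1,-1)$ give $A=-I$ directly; so the only substantive case is the mixed one, where $\det(A)=-1$ and $\operatorname{tr}(A)=0$. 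The whole difficulty is concentrated here, and in particular in explaining why a single rational conjugacy type produces the two distinct integral models $\begin{pmatrix}1&0\\0&-1\end{pmatrix}$ and $\begin{pmatrix}0&1\\1&0\end{pmatrix}$.

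For the mixed case, write $L_{+}=\ker(A-I)\cap\mathbb{Z}^2$ and $L_{-}=\ker(A+I)\cap\mathbb{Z}^2$; each is a rank-one primitive sublattice, say $L_{\pm}=\mathbb{Z}v_{\pm}$ with $v_{\pm}$ a primitive integer vector. The key observation is that for every $w\in\mathbb{Z}^2$ the vectors $w+Aw$ and $w-Aw$ are fixed and anti-fixed respectively, hence lie in $L_{+}$ and $L_{-}$; therefore $2w=(w+Aw)+(w-Aw)$ lies in $M:=\mathbb{Z}v_{+}\oplus\mathbb{Z}v_{-}$. This gives the sandwich $2\mathbb{Z}^2\subseteq M\subseteq\mathbb{Z}^2$, so the index $m:=[\mathbb{Z}^2:M]$ divides $4$. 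Since $v_{+}$ and $v_{-}$ are primitive, they cannot lie in $2\mathbb{Z}^2$, which rules out $M=2\mathbb{Z}^2$ and hence $m=4$; thus $m\in\{1,2\}$.

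If $m=1$, then $\{v_{+},v_{-}\}$ is a $\mathbb{Z}$-basis of $\mathbb{Z}^2$, and conjugating by $P=[\,v_{+}\mid v_{-}\,]\in GL(2,\mathbb{Z})$ yields $P^{-1}AP=\mathrm{diag}(1,-1)$. If $m=2$, then passing to $\mathbb{Z}^2/2\mathbb{Z}^2\cong(\mathbb{Z}/2)^2$ shows that $M/2\mathbb{Z}^2$ has order $2$, generated by the nonzero classes of $v_{+}$ and $v_{-}$; these two classes must therefore coincide, so $v_{+}-v_{-}\in 2\mathbb{Z}^2$ and $w:=\tfrac12(v_{+}-v_{-})\in\mathbb{Z}^2$. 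A short determinant computation, using $\det[\,v_{+}\mid v_{-}\,]=\pm 2$, shows that $\{w,Aw\}$ with $Aw=\tfrac12(v_{+}+v_{-})$ is a $\mathbb{Z}$-basis, and in this basis $A$ interchanges the two generators, giving the model $\begin{pmatrix}0&1\\1&0\end{pmatrix}$.

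I expect the main obstacle to be precisely the $m=2$ subcase: one must locate the correct half-integral vector $w$ inside $\mathbb{Z}^2$ and verify that $\{w,Aw\}$ is genuinely unimodular, which is the step where the integral (rather than merely rational) structure is used. Finally, I would record that the three listed matrices are pairwise non-conjugate in $GL(2,\mathbb{Z})$: the value $\operatorname{tr}(A)$ separates $-I$ from the two reflections, and the index invariant $m$ (equal to $1$ for $\mathrm{diag}(1,-1)$ and $2$ for the swap) separates the two reflections from each other, confirming that the classification is exhaustive and irredundant.
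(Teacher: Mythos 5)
Your proof is correct, and I checked each step: the diagonalizability over $\mathbb{Q}$ from the squarefree minimal polynomial, the primitivity of the eigenlattice generators $v_{\pm}$, the sandwich $2\mathbb{Z}^2\subseteq M\subseteq\mathbb{Z}^2$ via $2w=(w+Aw)+(w-Aw)$, the exclusion of $m=4$ by primitivity, the congruence $v_+\equiv v_-\pmod{2\mathbb{Z}^2}$ forced by $M/2\mathbb{Z}^2$ having order $2$ in $(\mathbb{Z}/2)^2$, and the determinant computation $\det[\,w\mid Aw\,]=\tfrac12\det[\,v_+\mid v_-\,]=\pm1$ in the $m=2$ case all hold. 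Note, however, that the paper contains no proof of this statement at all: it is quoted verbatim from \cite[Lemma~3.2]{IS}, so the comparison is really with the book's argument. The standard proof there proceeds by normal forms rather than invariants: in the mixed case one picks a primitive eigenvector, completes it to a $\mathbb{Z}$-basis to put $A$ in the shape $\begin{pmatrix}1&b\\0&-1\end{pmatrix}$, and then conjugates by $\begin{pmatrix}1&k\\0&1\end{pmatrix}$ to reduce $b$ modulo $2$, with $b=0$ giving $\mathrm{diag}(1,-1)$ and $b=1$ giving the swap matrix. Your lattice-index route is genuinely different and arguably buys more: the quantity $m=[\mathbb{Z}^2:L_+\oplus L_-]\in\{1,2\}$ is manifestly a $GL(2,\mathbb{Z})$-conjugacy invariant (conjugation by $P$ carries the eigenlattices to $PL_{\pm}$ and preserves indices), so it simultaneously drives the case split and certifies, without any extra work, that the two reflections are non-conjugate — a fact the normal-form argument must establish separately, e.g.\ by reduction modulo $2$. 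The price is that your $m=2$ subcase requires the explicit construction of the half-integral vector $w$ and the unimodularity check, which you correctly identified as the step where integrality genuinely enters; that check is complete as written.
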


\begin{theorem}[{\cite[Theorem~4.1]{IS}}]\label{GL2Z-reversible}An element $A\in GL(2,\mathbb Z)$ is reversible if and only if either
\begin{enumerate}
\item $\det(A)=-1$ and $A$ is an involution, or
\item $\det(A)=1$ and $A$ is conjugate to a matrix
\(
\begin{pmatrix}a&b\\c&d\end{pmatrix}
\)
with $a=d$ or $b=\pm c$.
\end{enumerate}
\end{theorem}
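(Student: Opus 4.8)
The plan is to treat the two determinant cases separately, using throughout that reversibility of $A$ means $A$ is conjugate in $GL(2,\mathbb{Z})$ to $A^{-1}$, and that both $\det$ and $\operatorname{tr}$ are conjugacy invariants with $\operatorname{tr}(A^{-1})=\operatorname{tr}(A)/\det(A)$ for $2\times2$ matrices. The determinant $-1$ case then falls out immediately: if $A$ is reversible with $\det(A)=-1$, then $\operatorname{tr}(A)=\operatorname{tr}(A^{-1})=-\operatorname{tr}(A)$, so $\operatorname{tr}(A)=0$, and Cayley--Hamilton forces $A^{2}=I$, i.e. $A$ is an involution; conversely an involution equals its own inverse and is trivially reversible. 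This proves statement (1), and from here I would assume $\det(A)=1$.

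The organizing idea for the determinant $1$ case is to replace $A^{-1}$ by the transpose $A^{T}$. Since $A\in SL(2,\mathbb{Z})$ preserves the symplectic form $\omega=\begin{pmatrix}0&1\\-1&0\end{pmatrix}$, a direct computation gives $\omega A\omega^{-1}=(A^{T})^{-1}$; hence $A$ is conjugate to $(A^{T})^{-1}$, and taking inverses, $A^{-1}$ is conjugate to $A^{T}$. Therefore $A$ is reversible if and only if $A$ is conjugate to $A^{T}$, i.e. if and only if $A$ is \emph{self-adjoint} for some unimodular symmetric integral form $g$, meaning $A^{T}g=gA$ with $g=g^{T}\in GL(2,\mathbb{Z})$. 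This makes the ``if'' direction transparent: a symmetric matrix ($b=c$) is self-adjoint for $g=I$; a matrix with $a=d$ is self-adjoint for $g=\begin{pmatrix}0&1\\1&0\end{pmatrix}$; and a matrix with $b=-c$ is self-adjoint for $g=\begin{pmatrix}1&0\\0&-1\end{pmatrix}$. As reversibility is a conjugacy invariant, every $A$ conjugate to such a matrix is reversible.

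For the ``only if'' direction I would assume $gAg^{-1}=A^{T}$ with $g\in GL(2,\mathbb{Z})$ and upgrade $g$ to a symmetric conjugator. Treat first the case of irreducible characteristic polynomial (the elliptic and hyperbolic subcases), where the centralizer of $A$ among rational matrices is the field $\mathbb{Q}[A]$. Setting $c=g^{-1}g^{T}$, one checks that $c$ centralizes $A$, so $c^{-1}=xI+yA$ for some $x,y\in\mathbb{Q}$; rewriting $g=c^{T}gc$ as $c^{T}=gc^{-1}g^{-1}=xI+yA^{T}$ and comparing with $c^{T}=(xI+yA^{T})^{-1}$ gives $(xI+yA^{T})^{2}=I$, whence $c=\pm I$. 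Since $c=-I$ would make $g$ antisymmetric and force $A=\pm I$, irreducibility leaves only $c=I$, i.e. $g=g^{T}$. Thus $A$ is self-adjoint for the unimodular symmetric form $g$; changing basis by $U\in GL(2,\mathbb{Z})$ conjugates $A$ and transforms $g$ by congruence while preserving self-adjointness, so I would normalize $g$. The classification of rank-$2$ unimodular symmetric integral forms yields exactly $\pm I$, $\begin{pmatrix}0&1\\1&0\end{pmatrix}$, and $\begin{pmatrix}1&0\\0&-1\end{pmatrix}$, and reading off $A^{T}g=gA$ in each case returns $b=c$, $a=d$, and $b=-c$ respectively. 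The reducible subcase, $\operatorname{tr}(A)=\pm2$ and $A=\pm I$, is immediate, since such $A$ is conjugate to an upper-triangular matrix with equal diagonal $a=d=\pm1$.

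I expect the main obstacle to be this ``only if'' argument in the hyperbolic subcase, which is exactly where integrality matters: over $\mathbb{Q}$ every matrix is conjugate to its transpose, so the content is entirely about $\mathbb{Z}$-conjugacy classes, governed by the ideal class group of the order $\mathbb{Z}[A]$, which need not collapse. The two technical pillars are the centralizer computation showing the conjugator may be taken symmetric, and the classification of unimodular binary symmetric forms into the three congruence types above; together they convert the arithmetic assertion that the conjugacy class is ambiguous into the concrete normal forms $a=d$ or $b=\pm c$.
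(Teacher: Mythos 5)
Your proposal is correct, but note first that the paper contains no proof of this statement to compare against: Theorem~\ref{GL2Z-reversible} is quoted from \cite[Theorem~4.1]{IS}, and Section~\ref{glz} is explicitly described as a review included only to fix notation. So the comparison is with the literature rather than with an in-paper argument. Your determinant~$-1$ case is the standard one: reversibility forces $\operatorname{tr}(A)=\operatorname{tr}(A^{-1})=\operatorname{tr}(A)/\det(A)=-\operatorname{tr}(A)$, hence $\operatorname{tr}(A)=0$, and Cayley--Hamilton gives $A^2=I$. Your determinant~$1$ case runs along the same circle of ideas as the source \cite{IS} and its antecedent Baake--Roberts \cite{BR2}: the symplectic identity $A^{T}\omega A=\det(A)\,\omega$ converts reversibility into $\mathbb{Z}$-conjugacy of $A$ to $A^{T}$ (which is exactly where the integrality lives, by Latimer--MacDuffee, as you observe); the Taussky-style centralizer computation with $c=g^{-1}g^{T}$ upgrades the conjugator to a symmetric unimodular $g$ (your use of irreducibility of the characteristic polynomial is genuinely needed there, since $c^{2}=I$ only forces $c=\pm I$ when $\mathbb{Q}[A]$ is a field, and irreducibility does hold for all elliptic and hyperbolic elements of $SL(2,\mathbb{Z})$ because $t^{2}-4$ is a square only for $t=\pm2$); and the four congruence classes $\pm I$, $\begin{pmatrix}0&1\\1&0\end{pmatrix}$, $\begin{pmatrix}1&0\\0&-1\end{pmatrix}$ of unimodular symmetric binary forms read off precisely the normal forms $b=c$, $a=d$, $b=-c$. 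This is in substance the quadratic-forms proof (reversibility corresponds to ambiguity of the associated binary form in Gauss's sense), so while it is a legitimate and complete route, it is convergent with, rather than different from, the argument in the cited literature.

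Two small points to tidy in a full write-up. The reducible subcase is misstated as ``$\operatorname{tr}(A)=\pm2$ and $A=\pm I$''; you mean $\operatorname{tr}(A)=\pm2$, which includes $\pm I$ together with the parabolic elements. The argument you indicate is right: a primitive integral eigenvector for the eigenvalue $\pm1$ extends to a $\mathbb{Z}$-basis, so any such $A$ is $GL(2,\mathbb{Z})$-conjugate to an upper triangular matrix with equal diagonal ($a=d$), and conversely every matrix of that shape is reversible by your ``if'' direction, so both implications are immediate. Finally, your two imported ingredients---that the rational centralizer of a matrix with irreducible characteristic polynomial is $\mathbb{Q}[A]$, and the classification of rank-two unimodular symmetric $\mathbb{Z}$-forms (where the even/odd distinction between $\begin{pmatrix}0&1\\1&0\end{pmatrix}$ and $\begin{pmatrix}1&0\\0&-1\end{pmatrix}$ is exactly what keeps both normal forms $a=d$ and $b=-c$ in the statement)---are standard but carry the real arithmetic content, so they should be cited or proved; the form classification in rank two follows from elementary reduction.
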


\begin{remark}
If $A\in GL(2,\mathbb Z)$ is reversible (resp.\ strongly reversible), then so is $-A$.
Hence it suffices to consider matrices with nonnegative trace.
\end{remark}

\subsection{Reversibility in $PGL(2,\mathbb Z)$}

Identify the boundary $\mathbb{S}^{1}$ (the unit circle in $\mathbb{R}^2$ centered at the origin) with $\mathbb{R}\cup\{\infty\}$ via projective coordinates. Now consider the involutions:
$$
\mathbf{I}_{1}(z)=\frac{1}{z}, 
\qquad 
\mathbf{I}_{2}(z)=-\frac{1}{z}.
$$

\begin{definition}
Two points $p_{1},p_{2}\in \mathbb{S}^{1}$ form a \emph{reciprocal pair} if the unordered set 
$\{p_{1},p_{2}\}$ is invariant under $\mathbf{I}_{1}$ or $\mathbf{I}_{2}$.  
Equivalently,
$$
\mathbf{I}_{j}(\{p_{1},p_{2}\})=\{p_{1},p_{2}\},\qquad j=1,2.
$$
\end{definition}

\begin{lemma}\label{PGLrev}
A hyperbolic element of $PGL(2,\mathbb Z)$ is reversible if and only if its two fixed
points on $\mathbb S^1$ form a reciprocal or symmetric pair (i.e. invariant under $z\mapsto -z)$.
\end{lemma}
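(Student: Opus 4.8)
The plan is to reduce reversibility of a hyperbolic $g$ to a purely geometric statement about its two boundary fixed points $p_1,p_2\in\mathbb{S}^1$ and then to read off the arithmetic from the standard involutions $\mathbf{I}_1,\mathbf{I}_2$ and $z\mapsto -z$. The starting observation is an elementary fact about Möbius maps: if $h$ interchanges two distinct points $p_1,p_2$, then after conjugating so that $p_1=0$ and $p_2=\infty$, the only transformations with $0\mapsto\infty$ and $\infty\mapsto 0$ have the form $z\mapsto k/z$, which are involutions; hence any element interchanging $p_1$ and $p_2$ automatically squares to the identity. Working in these normalized coordinates, where $g:z\mapsto\mu z$ with $\mu\neq\pm1$, one checks directly that $hgh^{-1}=g^{-1}$ forces $\{h(0),h(\infty)\}=\{0,\infty\}$, and that $h$ cannot fix both points (otherwise $hgh^{-1}=g\neq g^{-1}$); conversely every $z\mapsto k/z$ conjugates $\mu z$ to $\mu^{-1}z$. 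Thus, for hyperbolic $g$, reversibility coincides with strong reversibility, and both are equivalent to the existence of an \emph{involution} in $PGL(2,\mathbb{Z})$ that interchanges $p_1$ and $p_2$.

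For the backward direction, suppose $\{p_1,p_2\}$ is a reciprocal or symmetric pair, i.e.\ invariant under one of $\mathbf{I}_1$, $\mathbf{I}_2$, or $z\mapsto -z$. Since the two fixed points of a hyperbolic element are distinct, set-invariance forces the relevant standard involution $\iota$ either to swap $p_1,p_2$ or to fix both; in the latter case $\{p_1,p_2\}$ is the fixed-point set of $\iota$, namely $\{1,-1\}$ for $\mathbf{I}_1$ or $\{0,\infty\}$ for $z\mapsto -z$, and each of these pairs is interchanged by another standard involution ($z\mapsto -z$ swaps $\{1,-1\}$, while $\mathbf{I}_1$ swaps $\{0,\infty\}$). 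In every case we obtain a standard involution interchanging $p_1$ and $p_2$, and by the first paragraph it conjugates $g$ to $g^{-1}$, so $g$ is reversible.

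For the forward direction, assume $g$ is reversible and let $\iota$ be the reversing involution supplied by the first paragraph. Writing $\iota=[M]$ with $M\in GL(2,\mathbb{Z})$, the relation $\iota^2=\mathrm{id}$ in $PGL(2,\mathbb{Z})$ means $M^2=\pm I$; combining Lemma~\ref{GL2Z-involutions} (the case $M^2=I$) with the order-four case $M^2=-I$ shows that $\iota$ is conjugate in $PGL(2,\mathbb{Z})$ to exactly one of $\mathbf{I}_1$, $\mathbf{I}_2$, or $z\mapsto -z$. Conjugating the whole configuration by the normalizing element brings $\iota$ to a standard form while carrying $\{p_1,p_2\}$ to the fixed-point pair of a conjugate of $g$, which is then swapped by that standard involution and so is reciprocal or symmetric. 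I would then make the arithmetic explicit by representing $g$ by $A=\begin{pmatrix}a&b\\ c&d\end{pmatrix}$, so that $p_1,p_2$ are the roots of $c\,p^2+(d-a)p-b=0$ with $p_1+p_2=(a-d)/c$ and $p_1p_2=-b/c$: the symmetric condition $p_1+p_2=0$, the $\mathbf{I}_1$-reciprocal condition $p_1p_2=1$, and the $\mathbf{I}_2$-reciprocal condition $p_1p_2=-1$ translate respectively into $a=d$, $b=-c$, and $b=c$, which are exactly the conditions of Theorem~\ref{GL2Z-reversible}.

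The main obstacle is the bookkeeping in this forward direction, and specifically the fact that the reciprocal/symmetric dichotomy is a statement about the \emph{conjugacy class} rather than about an individual matrix: a reversible $g$ need not satisfy $p_1+p_2=0$ or $p_1p_2=\pm1$ on the nose, but some conjugate does, and it is this conjugation-normalized pair that is reciprocal or symmetric. Handling this cleanly requires a careful enumeration of the $PGL(2,\mathbb{Z})$-conjugacy classes of involutions, including the elliptic class $M^2=-I$ represented by $\mathbf{I}_2$, which has no real fixed points and yet swaps antipodal-type pairs, together with a verification that conjugating to normalize $\iota$ really does send the swapped pair to a standard reciprocal or symmetric pair. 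The cases $\det A=1$ and $\det A=-1$ are treated in parallel, the latter using that a determinant $-1$ matrix always has two distinct real eigendirections, and the degenerate possibility that $\iota$ shares a fixed point with $g$ is excluded because $\iota$ must interchange, not fix, the hyperbolic pair.
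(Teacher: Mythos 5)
Your proposal is correct and takes essentially the same route as the paper's proof: a reverser must preserve the fixed-point pair of the hyperbolic element and is thereby forced to be an involution, the involutions of $PGL(2,\mathbb{Z})$ are classified up to conjugacy, and Vieta's formulas $p_1+p_2=(a-d)/c$, $p_1p_2=-b/c$ translate the reciprocal/symmetric condition into $a=d$ or $b=\pm c$, matching Theorem~\ref{GL2Z-reversible}. If anything, you are more careful than the paper at two spots it glosses over: you include $z\mapsto -z$ (and the order-four lifts with $M^2=-I$, which Lemma~\ref{GL2Z-involutions} alone does not capture) among the conjugacy classes of projective involutions, and you explicitly flag that the reciprocal/symmetric property of the fixed pair is attained only after conjugating to normalize the reversing involution --- the same ``up to conjugacy'' reading already implicit in the ``conjugate to'' phrasing of Theorem~\ref{GL2Z-reversible}.
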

\begin{proof}
Assume $g$ is reversible.  
Then there exists $R\in PGL(2,\mathbb{Z})$ with
$$
RgR^{-1}=g^{-1}.
$$
The two fixed points $\{p_{1},p_{2}\}$ of $g$ are exchanged by $g^{-1}$.  
The conjugacy relation implies that $R$ preserves this set; hence $\{p_{1},p_{2}\}$ is invariant under an involution of $PGL(2,\mathbb{Z})$.  
Every involution is conjugate to $z\mapsto z^{-1}$ or $z\mapsto -z^{-1}$, so the fixed points must be reciprocal or symmetric.

Conversely, suppose the fixed points are reciprocal.  
If $p_{1}p_{2}=\pm1$, then solving the fixed-point equation
$$
\frac{az+b}{cz+d}=z
$$
yields
$$
p_{1}+p_{2}=\frac{a-d}{c},\qquad
p_{1}p_{2}=-\frac{b}{c}.
$$
Thus $b=\pm c$, so $g$ is conjugate to one of
$$
\begin{pmatrix} a & b \\ -b & d\end{pmatrix},\qquad
\begin{pmatrix} a & b \\ b & d\end{pmatrix},
$$
each reversed by an involution of $PGL(2,\mathbb{Z})$.  
The symmetric case is identical.
\end{proof}

\subsection{Strong reversibility}

\begin{theorem}[{\cite[Theorem~5.3]{IS}}]\label{GL2Z-strong}
Let $A\in GL(2,\mathbb Z)$.
\begin{enumerate}
\item If $A$ is elliptic or parabolic, then $A$ is strongly reversible.
\item If $A$ is hyperbolic, then $A$ is strongly reversible if and only if it is conjugate
to a matrix
\[
\begin{pmatrix}a&b\\c&d\end{pmatrix}
\quad\text{with}\quad a=d \text{ or } b=-c.
\]
\end{enumerate}
\end{theorem}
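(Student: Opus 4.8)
The plan is to handle the elliptic/parabolic and hyperbolic cases through the same mechanism—reduce $A$ to a convenient $GL(2,\mathbb{Z})$-normal form and then either exhibit or extract an explicit reversing involution—while exploiting two facts: strong reversibility is a conjugacy invariant, and $-A$ is strongly reversible exactly when $A$ is (the Remark following Theorem~\ref{GL2Z-reversible}). Throughout I would use Lemma~\ref{GL2Z-involutions}, which restricts the possible reversing involutions to the three standard classes $-I_2$, the swap $J=\begin{pmatrix}0&1\\1&0\end{pmatrix}$, and the reflection $K=\begin{pmatrix}1&0\\0&-1\end{pmatrix}$. A routine computation records the two facts I will lean on repeatedly: when $\det A=1$, a matrix with $a=d$ satisfies $KAK^{-1}=A^{-1}$, and one with $b=-c$ satisfies $JAJ^{-1}=A^{-1}$.

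For part (1), I would first note that over $GL(2,\mathbb{Z})$ an elliptic element has finite order and is conjugate to a rotation-type normal form of order $3$, $4$, or $6$, while a parabolic element is conjugate to $\pm\begin{pmatrix}1&n\\0&1\end{pmatrix}$. Each of these normal forms visibly has either equal diagonal entries or antisymmetric off-diagonal entries (for instance $\begin{pmatrix}0&-1\\1&0\end{pmatrix}$, $\begin{pmatrix}0&-1\\1&-1\end{pmatrix}$, $\begin{pmatrix}0&-1\\1&1\end{pmatrix}$, and the shear), so the computation above reverses each of them by $K$ or $J$; the negative-trace representatives are then absorbed by the Remark. This settles strong reversibility in all elliptic and parabolic cases.

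For the \emph{if} direction of part (2), the same two computations apply verbatim, and conjugation-invariance upgrades the conclusion from the normal form to any conjugate. The substance of the theorem is the \emph{only if} direction, which I would organize by the generic case $\det A=1$, where $A$ has infinite order. Starting from a reversing involution $h$ (so $h^2=I$ and $hAh^{-1}=A^{-1}$), the relation $A^2=I$ is impossible, whence $h\neq\pm I$; thus $h$ has determinant $-1$ and, by Lemma~\ref{GL2Z-involutions}, is $GL(2,\mathbb{Z})$-conjugate to $J$ or $K$. Conjugating the pair $(A,h)$ simultaneously by the corresponding $P\in GL(2,\mathbb{Z})$ brings $h$ to standard form while preserving the relation, and comparing $h'A'h'=(A')^{-1}$ entry by entry, with $A'=PAP^{-1}=\begin{pmatrix}a&b\\c&d\end{pmatrix}$, forces $a=d$ in the reflection case and $b=-c$ in the swap case—exactly the stated normal form up to conjugacy.

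The step I expect to be the main obstacle is the bookkeeping in this final comparison together with the determinant analysis that silently underlies it. When $\det A=-1$ one has $\operatorname{tr}(A^{-1})=-\operatorname{tr}(A)$, so $A$ can be reversible only if $\operatorname{tr}(A)=0$; by Cayley--Hamilton such a matrix satisfies $A^{2}=I$ and is thus an involution, already $GL(2,\mathbb{Z})$-conjugate to $J$ or $K$ (both of which meet the stated condition), so these degenerate cases are disposed of by direct inspection rather than by the involution argument. This confines the real work to $\det A=1$, where the entry comparison itself is short. The essential subtlety is that the reduction of $h$ to standard form must be carried out over $\mathbb{Z}$, not merely over $\mathbb{R}$; this integrality is precisely what Lemma~\ref{GL2Z-involutions} supplies, and it is what distinguishes the arithmetic statement from its real-matrix analogue, where every hyperbolic element is reversed by a reflection.
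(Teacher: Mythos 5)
The paper itself contains no proof of this statement---it is imported verbatim from \cite[Theorem~5.3]{IS}---so your proposal can only be compared with the standard argument (O'Farrell--Short, going back to Baake--Roberts), and it does follow exactly that route: reduce to normal forms, restrict the possible reversers via Lemma~\ref{GL2Z-involutions}, and compare entries. Your two pivot computations are correct \emph{when} $\det A=1$: there $A^{-1}=\left(\begin{smallmatrix} d&-b\\ -c&a\end{smallmatrix}\right)$, while $KAK=\left(\begin{smallmatrix} a&-b\\ -c&d\end{smallmatrix}\right)$ and $JAJ=\left(\begin{smallmatrix} d&c\\ b&a\end{smallmatrix}\right)$, so $a=d$ gives $K$-reversal, $b=-c$ gives $J$-reversal, and conversely simultaneous conjugation of the pair $(A,h)$ followed by entrywise comparison forces $a=d$ or $b=-c$. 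The exclusion $h\neq\pm I$ from $A^{2}\neq I$ is right, as is part~(1) via the order-$3,4,6$ rotation forms and the shear, with negative traces absorbed by the Remark after Theorem~\ref{GL2Z-reversible}.

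The genuine soft spot is your sentence that for the ``if'' direction of part~(2) ``the same two computations apply verbatim.'' They do not when $\det A=-1$, and under this paper's definition \emph{every} $\det=-1$ matrix is hyperbolic (elliptic and parabolic both force $\det A=1$), so case~(2) includes them. For $\det A=-1$ one has $A^{-1}=\left(\begin{smallmatrix} -d&b\\ c&-a\end{smallmatrix}\right)$, the $J$- and $K$-computations break down, and in fact the implication is false: $A=\left(\begin{smallmatrix}1&1\\ 2&1\end{smallmatrix}\right)$ has $\det A=-1$ and $a=d$, yet $\operatorname{tr}(A^{-1})=-\operatorname{tr}(A)=-2\neq 2$, so $A$ is not even reversible. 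Thus the ``if'' half of~(2), read literally over all of $GL(2,\mathbb{Z})$, fails for $\det A=-1$; this is really a wrinkle in the quoted statement itself, and your own final paragraph contains the correct repair---for $\det A=-1$ reversibility forces $\operatorname{tr}(A)=0$, hence $A^{2}=I$ by Cayley--Hamilton, and such involutions are conjugate to $J$ or $K$---but you deploy it only for the ``only if'' direction. To be airtight you should say explicitly that the equivalence in~(2) is proved under $\det A=1$ (where your computations are valid), and that for $\det A=-1$ strong reversibility is equivalent to $A$ being an involution; as written, your ``verbatim'' claim inherits the edge case rather than disposing of it.
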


\section{Fixed Points and Basic Dynamical Properties}\label{fp} 
The main focus of this section is a geometric fixed-point criterion for affine toral automorphisms.
In addition, we compute the topological entropy of affine toral automorphisms, relating it to the classical computation for linear toral automorphisms, see \cite[Chapter 5]{PY}.

\medskip
We begin with the following standard lemma; see Theorem 10.14(3) of \cite{JH}. More information about the dynamics of toral automorphisms can be found in \cite{PY, BS}.

\begin{lemma}\label{onto endomorphism}
Let $A \in M(2,\mathbb{Z})$ with $\det(A)\neq 0$.  
Then the toral automorphism $f_A$ given by
$$
\bar{x} \mapsto A\bar{x}~(\text{mod}~\mathbb{Z}^2)
$$
is a surjective self–map on $\mathbb{T}^2$ with the topological degree $|\det(A)|$  
$($e.g., see $Figure~\ref{fig: proj})$.
\end{lemma}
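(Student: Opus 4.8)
The plan is to treat $f_A$ as the map induced on the quotient $\mathbb{T}^2=\mathbb{R}^2/\mathbb{Z}^2$ by the linear map $A$, and to read off both surjectivity and degree from the geometry of the sublattice $A\mathbb{Z}^2$. First I would record that $A\in M(2,\mathbb{Z})$ carries $\mathbb{Z}^2$ into $\mathbb{Z}^2$, so $f_A$ is well defined, and that with the covering projection $\pi\colon\mathbb{R}^2\to\mathbb{T}^2$ one has $f_A\circ\pi=\pi\circ A$. Since $\det(A)\neq 0$, the linear map $A\colon\mathbb{R}^2\to\mathbb{R}^2$ is a bijection, so $\pi\circ A$ is onto; as $f_A\circ\pi=\pi\circ A$ and $\pi$ is surjective, this forces $f_A$ to be surjective.

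For the degree I would show that $f_A$ is a finite covering and count its sheets. Because $A$ is a linear isomorphism of $\mathbb{R}^2$ it is a local diffeomorphism, and $\pi$ is itself a local diffeomorphism; hence $f_A$ is a local homeomorphism of the compact connected surface $\mathbb{T}^2$, and a proper local homeomorphism between closed manifolds is a covering map. The number of sheets is the cardinality of a single fibre. Fixing $\bar{y}\in\mathbb{T}^2$ with a lift $y\in\mathbb{R}^2$, the preimages are the classes of $A^{-1}(y+n)$ for $n\in\mathbb{Z}^2$, and two such lifts $A^{-1}(y+n)$ and $A^{-1}(y+n')$ project to the same point of $\mathbb{T}^2$ exactly when $n-n'\in A\mathbb{Z}^2$. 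Thus $f_A^{-1}(\bar{y})$ is in natural bijection with the finite quotient $\mathbb{Z}^2/A\mathbb{Z}^2$, independent of $\bar{y}$.

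It then remains to identify this index, which is the crux of the computation: $[\mathbb{Z}^2:A\mathbb{Z}^2]=|\det(A)|$. I would obtain this from the Smith normal form $A=UDV$ with $U,V\in GL(2,\mathbb{Z})$ and $D=\operatorname{diag}(d_1,d_2)$, whence $[\mathbb{Z}^2:A\mathbb{Z}^2]=|d_1 d_2|=|\det(A)|$; equivalently, $|\det(A)|$ is the area of a fundamental domain of $A\mathbb{Z}^2$ relative to that of $\mathbb{Z}^2$. Consequently every fibre of $f_A$ has exactly $|\det(A)|$ points and the topological degree is $|\det(A)|$. As a cross-check one may instead read the degree off homology: $(f_A)_{\ast}$ acts on $H_1(\mathbb{T}^2;\mathbb{Z})\cong\mathbb{Z}^2$ by $A$, hence on $H_2(\mathbb{T}^2;\mathbb{Z})\cong\Lambda^2\mathbb{Z}^2$ by $\det(A)$, giving the signed degree $\det(A)$ and unsigned degree $|\det(A)|$. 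The only genuine obstacle here is bookkeeping rather than conceptual: one must justify that the preimage count is constant across fibres (handled by the covering property) and settle the sign convention, noting that when $\det(A)<0$ the map reverses orientation while the relevant sheet count is still $|\det(A)|$.
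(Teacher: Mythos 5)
Your proof is correct. Note, first, that the paper itself offers no proof of this lemma: it is stated as standard and referred to Theorem~10.14(3) of \cite{JH}, with Figure~\ref{fig: proj} serving only as an illustration, so there is no in-paper argument to match against; your write-up supplies what the paper delegates to the literature. Your route --- descend $A$ through the covering $\pi\colon\mathbb{R}^2\to\mathbb{T}^2$, get surjectivity from bijectivity of $A$ on $\mathbb{R}^2$, upgrade $f_A$ to a covering map via the proper-local-homeomorphism criterion (properness is automatic here since $\mathbb{T}^2$ is compact), identify each fibre with $\mathbb{Z}^2/A\mathbb{Z}^2$, and compute the index via Smith normal form as $|\det(A)|$ --- is exactly the standard textbook argument behind the cited result, and all the steps check: in particular the fibre identification $A^{-1}(y+n)\sim A^{-1}(y+n')\iff n-n'\in A\mathbb{Z}^2$ is right, and the sheet count is constant precisely because $f_A$ is a covering. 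The homological cross-check (action by $A$ on $H_1(\mathbb{T}^2;\mathbb{Z})$, hence by $\det(A)$ on $H_2$) is a worthwhile addition, since it cleanly separates the \emph{signed} degree $\det(A)$ from the unsigned sheet count $|\det(A)|$ that the lemma asserts --- a distinction the paper's statement glosses over. The geometric picture in the paper's figure (the image $A([0,1]^2)$ tiling the torus $|\det(A)|$ times) is just the area form of your lattice-index computation, so the two viewpoints agree.
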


\begin{figure}[h]
    \centering
    \includegraphics[scale=.5]{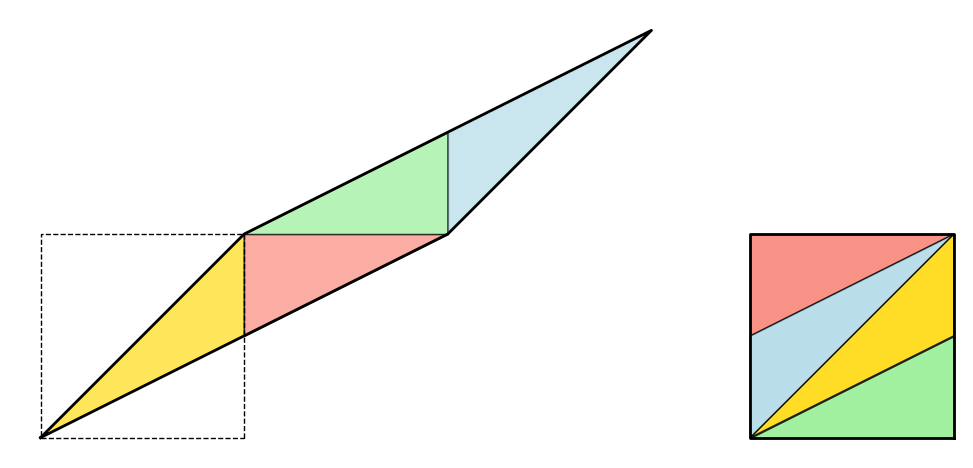}
    \caption{\small Toral endomorphism for the Arnold's cat map induced by $A=\begin{pmatrix}
    2 & 1\\ 1 &1
\end{pmatrix}$. Left figure depicts the image $A([0,1]^2)$ in $\mathbb{R}^2 $, while the right figure depicts the same image after taking $\text{mod}~ \mathbb{Z}^2$.}
    \label{fig: proj}
\end{figure}

\begin{definition}\label{topological conjugacy}
    Two dynamical systems $(X,f)$ and $(Y,g)$ are said to be topologically conjugate if there exists a homeomorphism $h:X\to Y$ such that $h\circ f=g\circ h$. In particular, if the following diagram commutes:
\end{definition}

\[ \begin{tikzcd}
X\arrow{r}{f} \arrow[swap]{d}{h} & X \arrow{d}{h} \\%
Y \arrow{r}{g}& Y
\end{tikzcd}
\]

When two dynamical systems are topologically conjugate, their dynamics on the respective spaces are similar, and the conjugacy $h$ maps the orbit of one system to the orbit of the other.

\subsection*{Affine toral maps and conjugacy}

Every element $(A,\overline{b}) \in G$ induces the affine toral map
$$
f_{A,\overline{b}}(\bar{x}) = A\bar{x} + \overline{b} ~(\text{mod}~\mathbb{Z}^2).
$$
A natural question is whether these affine maps are dynamically similar to the linear toral automorphisms
$$
\bar{x} \mapsto A\bar{x}~(\text{mod}~\mathbb{Z}^2).
$$
When $1$ is not an eigenvalue of $A$, the answer is affirmative.

\begin{proposition}\label{conjugacy}
Let $A \in GL(2,\mathbb{Z})$ with $1$ not an eigenvalue.  
Then the maps $f_{A,\overline{b}}: \bar{x} \mapsto A\bar{x} + \overline{b}~(\text{mod}~\mathbb{Z}^2)$ and $f_A:\bar{x} \mapsto A\bar{x}~(\text{mod}~\mathbb{Z}^2)$ are topologically conjugate on $\mathbb{T}^2$ via a translation.
\end{proposition}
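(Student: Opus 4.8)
The plan is to produce the conjugating homeomorphism explicitly as a translation and to reduce the conjugacy condition to a linear equation on the torus whose solvability is governed exactly by the eigenvalue hypothesis. Since topological conjugacy is symmetric, I would look for a map $h(\bar{x}) = \bar{x} + \bar{c} \pmod{\mathbb{Z}^2}$ for a suitable $\bar{c} \in \mathbb{T}^2$ and impose the intertwining relation $h \circ f_A = f_{A,\overline{b}} \circ h$ dictated by Definition~\ref{topological conjugacy}.

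Substituting the definitions, the left-hand side sends $\bar{x}$ to $A\bar{x} + \bar{c}$, while the right-hand side sends $\bar{x}$ to $A(\bar{x} + \bar{c}) + \overline{b} = A\bar{x} + A\bar{c} + \overline{b}$, everything taken mod $\mathbb{Z}^2$. Equating these for every $\bar{x}$ collapses to the single requirement $(I - A)\bar{c} = \overline{b} \pmod{\mathbb{Z}^2}$. Thus the translation $h$ conjugates $f_A$ to $f_{A,\overline{b}}$ precisely when this equation admits a solution $\bar{c}$.

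This is where the hypothesis enters. Since $1$ is not an eigenvalue of $A$, we have $\det(I - A) \neq 0$, so the integer matrix $I - A$ induces, by Lemma~\ref{onto endomorphism}, a surjective endomorphism of $\mathbb{T}^2$; hence $(I - A)\bar{c} = \overline{b}$ is solvable for $\bar{c} \in \mathbb{T}^2$. Equivalently, one may simply take $\bar{c} = (I - A)^{-1}\overline{b}$, computed over $\mathbb{R}$ on a lift of $\overline{b}$ and then reduced mod $\mathbb{Z}^2$; this descends to a well-defined element of $\mathbb{T}^2$ because $I - A$ preserves $\mathbb{Z}^2$.

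Finally I would observe that any translation of $\mathbb{T}^2$ is a homeomorphism, with continuous inverse $\bar{x} \mapsto \bar{x} - \bar{c}$, so $h$ is the desired topological conjugacy. The argument is essentially a one-line computation once the target equation $(I-A)\bar{c} = \overline{b}$ is isolated; there is no genuinely hard step. The only point requiring care — and the sole place the eigenvalue hypothesis is used — is guaranteeing solvability of this equation on the torus, which is exactly why the conclusion breaks down when $1$ \emph{is} an eigenvalue: there $I - A$ is singular, $\overline{b}$ need not lie in its image, and the translation trick fails.
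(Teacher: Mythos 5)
Your proof is correct and follows essentially the same route as the paper: both conjugate $f_A$ to $f_{A,\overline{b}}$ by a translation $\bar{x}\mapsto \bar{x}+\bar{c}$ where $\bar{c}$ solves $(I-A)\bar{c}\equiv \overline{b} \pmod{\mathbb{Z}^2}$ (equivalently, $\bar{c}$ is a fixed point of $f_{A,\overline{b}}$), which is solvable because $1$ is not an eigenvalue of $A$. One small inaccuracy, harmless here: $\bar{c}=(I-A)^{-1}\overline{b}$ is \emph{not} well-defined on $\mathbb{T}^2$ when $|\det(I-A)|>1$, since changing the lift of $\overline{b}$ by $\zeta\in\mathbb{Z}^2$ shifts $\bar{c}$ by $(I-A)^{-1}\zeta$, which need not be integral --- but any single lift yields a valid $\bar{c}$, and your surjectivity argument via Lemma~\ref{onto endomorphism} already secures existence (note the paper's own claim of a \emph{unique} solution $\bar{v}$ overstates matters for the same reason: there are $|\det(A-I)|$ solutions in $\mathbb{T}^2$).
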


\begin{proof}
Since $1$ is not an eigenvalue of $A$, the equation $\bar{v} = A\bar{v} + \overline{b}$ has a unique solution $\bar{v} \in \mathbb{T}^2$.  
The translation $\bar{x} \mapsto \bar{x}+\bar{v}~(\text{mod}~\mathbb{Z}^2)$ conjugates the affine map to the linear map.
\end{proof}

\begin{remark}
Proposition~\ref{conjugacy} fails when $1$ is an eigenvalue of the matrix $A$.
Indeed, let
\[
A=\begin{pmatrix}
1 & 1\\
0 & 1
\end{pmatrix}
\quad \text{and} \quad
\bar{b}=
\begin{pmatrix}
0\\
\alpha
\end{pmatrix},
\]
where $\alpha\in(0,1)\setminus\mathbb{Q}$.
Then the toral automorphism
\[
\bar{x} \longmapsto A\bar{x} \pmod{\mathbb Z^{2}}
\]
and the affine toral automorphism
\[
\bar{x} \longmapsto A\bar{x}+\bar b \pmod{\mathbb Z^{2}}
\]
are not topologically conjugate.
\end{remark}

We compute the topological entropy of the affine toral automorphisms next.  

\begin{theorem}\label{top entrpy}
   Let $A \in GL(2,\mathbb{Z})$ and $\bar{a} \in \mathbb{T}^2$. Then the toral automorphism $f_A$ and the affine toral automorphism $f_{A,\bar{a}}$ have the same topological entropy; that is,
\[
h_{\mathrm{top}}(f_A) = h_{\mathrm{top}}(f_{A,\bar{a}}).
\]
 
\end{theorem}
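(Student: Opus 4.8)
The plan is to prove that $h_{\mathrm{top}}(f_A) = h_{\mathrm{top}}(f_{A,\bar{a}})$ by exhibiting a semiconjugacy, or a direct comparison of orbit-counting data, that shows the translation term $\bar{a}$ cannot change the exponential growth rate of distinguishable orbits. The cleanest route splits into two cases according to whether $1$ is an eigenvalue of $A$, mirroring the dichotomy that has organized the entire paper so far.

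\emph{Case 1: $1$ is not an eigenvalue of $A$.} Here I would simply invoke Proposition~\ref{conjugacy}, which says that $f_{A,\bar{a}}$ and $f_A$ are topologically conjugate via the translation $\bar{x}\mapsto \bar{x}+\bar{v}$. Topological entropy is a conjugacy invariant: if $h$ is a homeomorphism with $h\circ f_{A,\bar{a}} = f_A \circ h$, then the two maps have identical orbit structure and hence equal entropy. This disposes of the generic case in one line.

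\emph{Case 2: $1$ is an eigenvalue of $A$.} This is where the conjugacy of Proposition~\ref{conjugacy} fails, so I would argue by a direct metric estimate rather than by conjugacy. The key observation is that $f_{A,\bar{a}}$ and $f_A$ are \emph{uniformly close}: for every $\bar{x}\in\mathbb{T}^2$ one has $d(f_{A,\bar{a}}(\bar{x}), f_A(\bar{x})) = d(\bar{x}+\bar{a},\bar{x})$, a constant independent of $\bar{x}$, and more importantly the two maps differ by a fixed translation that commutes with the linear part only up to the eigenvalue-$1$ behavior. The cleanest formulation is to lift both maps to $\mathbb{R}^2$ and note that $f_{A,\bar a}$ is obtained from $f_A$ by postcomposition with the isometry $\bar{x}\mapsto\bar{x}+\bar{a}$. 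Since the derivative $Df_{A,\bar{a}} = A = Df_A$ at every point, the two maps have exactly the same expansion rates on tangent vectors. I would then use the Bowen-style $(n,\varepsilon)$-spanning and separating set definition of topological entropy: because the Lyapunov/expansion data governing the growth of $(n,\varepsilon)$-separated sets depends only on the linear part $A$ (the translation being an isometry that preserves the metric), the separated-set counts for $f_{A,\bar a}$ and $f_A$ grow at the same exponential rate, giving equal entropy.

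The main obstacle I anticipate is making the Case~2 argument rigorous, since I cannot appeal to conjugacy and must instead control entropy directly. The honest way to handle this is to observe that for a toral map the topological entropy equals $\log$ of the product of the moduli of the eigenvalues of $A$ that lie outside the unit circle (equivalently $\log\max(1,|\lambda_1|)+\log\max(1,|\lambda_2|)$ for the eigenvalues $\lambda_i$ of $A$), a quantity that is manifestly independent of $\bar{a}$; this is the classical Lyapunov/variational computation for linear toral automorphisms recorded in \cite[Chapter 5]{PY}. So the real content is to verify that this computation is insensitive to an additive constant. For this I would argue that the translation $T_{\bar a}:\bar x\mapsto \bar x+\bar a$ is an isometry of the flat torus, that $f_{A,\bar a}=T_{\bar a}\circ f_A$, and that precomposing or postcomposing an expansive-type map by a fixed isometry leaves the exponential growth rate of $(n,\varepsilon)$-separated sets unchanged because the metric $d_n^{f_{A,\bar a}}(\bar x,\bar y)$ and $d_n^{f_A}(\bar x,\bar y)$ are comparable up to a bounded additive error coming from the at-most-polynomially-growing contribution of the eigenvalue-$1$ direction. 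Since polynomial factors do not affect the exponential rate, the two entropies coincide. I expect the technical bookkeeping in bounding this error term, rather than any conceptual difficulty, to be the crux.
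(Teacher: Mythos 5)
Your Case 1 is exactly the paper's argument: invoke Proposition~\ref{conjugacy} and the conjugacy invariance of topological entropy. In Case 2 you genuinely diverge. The paper's route is to compute both entropies to be zero: since $\det A=\pm1$, an eigenvalue $1$ forces the other eigenvalue to be $\pm1$, so $A$ is conjugate either to $I+N$ with $N^2=O$ (whence $\|A^n\|\le 1+n\|N\|$ grows linearly) or to $\mathrm{diag}(1,-1)$ (whence $\|A^n\|$ is bounded); feeding this Lipschitz bound into the Bowen metric gives $N_n(\mathbb{T}^2,\epsilon)=O(n^2/\epsilon^2)$, hence $h_{\mathrm{top}}(f_A)=0$, and the identical bound applies verbatim to $f_{A,\bar a}$. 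You instead propose a direct comparison of the Bowen metrics of the two maps, which is a legitimate and in fact cleaner strategy.

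However, your stated justification for that comparison needs repair on two points. First, the general principle you appeal to --- that postcomposing a map by a fixed isometry leaves the growth rate of $(n,\varepsilon)$-separated sets unchanged --- is false as a blanket statement: $T_{\bar a}$ commutes with $f_A$ only when $A\bar a=\bar a$, and entropy is not invariant under composition with non-commuting isometries in general. Second, the phrase ``comparable up to a bounded additive error coming from the at-most-polynomially-growing contribution'' is internally inconsistent (a polynomially growing error is not bounded). What rescues the argument, and what you should write instead, is the exact iterate computation: $f_{A,\bar a}^n(\bar x)=A^n\bar x+c_n \pmod{\mathbb{Z}^2}$ with $c_n=(A^{n-1}+\cdots+A+I)\bar a$ \emph{independent of} $\bar x$, so translation-invariance of the flat metric gives $d\bigl(f_{A,\bar a}^n(\bar x),f_{A,\bar a}^n(\bar y)\bigr)=d\bigl(A^n\bar x,A^n\bar y\bigr)=d\bigl(f_A^n(\bar x),f_A^n(\bar y)\bigr)$, i.e.\ the Bowen metrics of $f_{A,\bar a}$ and $f_A$ coincide \emph{exactly}, not merely up to error. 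The spanning and separated counts are then literally equal and the entropies agree. Notice this computation uses no hypothesis on the spectrum of $A$, so once you have it your case split (and Proposition~\ref{conjugacy}) becomes unnecessary: it proves the theorem for every $A\in GL(2,\mathbb{Z})$ in one stroke, which is both simpler and more general than the paper's two-case treatment.
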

\begin{proof}
If $1$ is not an eigen value of $A,$ then the proof follows from Proposition \ref{conjugacy}.    

Now assume 1 is an eigenvalue of $A \in GL(2,\mathbb{Z})$. This means the other eigenvalue is either $1$ or $-1$. If both eigenvalues of $A$ are equal to $1$ then there exists an invertible matrix $P$ so that, $PAP^{-1}=I+N, ~N^2=O \in M(2, \mathbb Z)$ and hence $PA^nP^{-1}=I+nN \implies ||PA^nP^{-1}|| \leq 1+n||N||=1+n$, where $||.||$ denotes the operator norm of a matrix. This means $d(f_A^n(x),f_A^n(y))\leq nC \cdot d(x,y)$ ($C$ is independent of $n$), where $d$ is the induced metric on the the torus.

Recall the Bowen metric $d_n$ on a compact metric space $X$, which measures the distance between orbits of length $n$,  $$d_n(x, y) = \max_{0 \leq i < n} d(f_A^i(x), f_A^i(y)).$$ Using the Lipschitz condition on each iterate $f_A^i$, we have,  $d(f_A^i(x), f_A^i(y))  \leq iC  d(x, y)$ and hence for $i \leq n$, we get that $d_n(x, y) \leq nC  d(x, y)$.

Let $N_n(\mathbb{T}^2, \epsilon)$ be the minimum number of $\epsilon$-balls in the $d_n$ metric required to cover $\mathbb{T}^2$. From the inequality above, we see that a ball in the original metric $d$ with radius $ \frac{\epsilon}{Cn}$ is entirely contained within $\epsilon$-ball in the $d_n$ metric. Therefore, the number of balls needed to cover $\mathbb{T}^2$ in the $d_n$ metric is at most the number of balls needed to cover $\mathbb{T}^2$ in the original metric with radius $\frac{\epsilon}{Cn}$ i.e., $N_n(\mathbb{T}^2, \epsilon) \leq N\left(\mathbb{T}^2, \frac{\epsilon}{Cn}\right),$ where $N(\mathbb{T}^2, \delta)$ is the minimum number of balls of radius $\delta$ required to completely cover   $\mathbb{T}^2$ in the $d$ metric. Clearly, $N(\mathbb{T}^2, \delta)=O(1/\delta^2)$, which means, 
\[
\begin{array}{c}
N_n(\mathbb{T}^2, \epsilon)
\leq N\!\left(\mathbb{T}^2, \dfrac{\epsilon}{Cn}\right)
\leq \dfrac{C_0 n^2}{\epsilon^2}. \\[0.6em]
\end{array}
\]

This yields, 

\[
\begin{array}{c}
\dfrac{\log N_n(\mathbb{T}^2, \epsilon)}{n}
\leq
\dfrac{\log C_0 + 2\log n - 2\log \epsilon}{n}
\end{array}
\]

hence,  
\[
\begin{array}{rcl}
\displaystyle
\limsup_{n \to \infty} \frac{1}{n}\log N_n(\mathbb{T}^2,\epsilon)
& \le &
\displaystyle
\limsup_{n \to \infty} \frac{\log C_0 + 2\log n - 2\log \epsilon}{n} \\[0.8em]
& = &
\displaystyle
\lim_{n \to \infty} \frac{\log C_0 + 2\log n - 2\log \epsilon}{n} \\[0.8em]
& = & 0 .
\end{array}
\]

Therefore, the topological entropy of the toral automorphism $f_A$ is
\[
h_{\mathrm{top}}(f_A)
=
\lim\limits_{\epsilon \to 0}
\limsup_{n \to \infty}
\frac{1}{n}\log N_n(\mathbb{T}^2,\epsilon)
= 0 .
\]

If the other eigenvalue is $-1$ then $A$ is conjugate to 
$\begin{pmatrix}
1 & 0\\
0 & -1
\end{pmatrix}$ so that $||A^n||\leq C$. It is clear that the same argument as above applies to this case. Hence $h_{\mathrm{top}}(f_{A})=0$, if one of the eigenvalues is equal to $1$. 

Since $f_{A,\bar{a}}$ is just a translation of $f_A$ by $\bar{a}$, the Lipschitz bound on $d_n$ metric remains unchanged and the whole proof above applies. Hence, we can conclude that $h_{\mathrm{top}}(f_A)=h_{\mathrm{top}}(f_{A,\bar{a}})=0$, if one of the eigenvalues is equal to 1.

\end{proof}

\subsection{Fixed-point criterion}

We now give a geometric condition guaranteeing that an affine toral map possesses a fixed point.
Let $ A=\begin{pmatrix}a&b\\ c&d\end{pmatrix}\in GL(2,\mathbb Z)$  and then
$$
A-I=\begin{pmatrix}a-1 & b\\ c & d-1\end{pmatrix}.
$$
Let $ v=(a-1,c)^{T}$  and $ w=(b,d-1)^{T}$  denote the column vectors given by
$ (A-I)e_1,(A-I)e_2, \text{where} ~e_1, e_2$ are the standard basis vectors of $\mathbb{R}^2.$ The parallelogram
$$
\mathcal P := (A-I)([0,1)^2) \subset\mathbb R^2
$$
is a fundamental domain (a choice of representatives) for the image $ (A-I)(\mathbb T^2)$ 
viewed in $ \mathbb R^2$  modulo $ \mathbb Z^2$. Its area equals
$$
\operatorname{Area}(\mathcal P)=\big|\det(A-I)\big| = \big|\det(A)-\operatorname{tr}(A)+1|,
$$
where the last equality follows from expanding $ \det(A-I)$.

The following elementary lattice-count fact will be used repeatedly.

\begin{lemma}\label{lem:segment-gcd}
Let $ u=(u_1,u_2)\in\mathbb Z^2$  be a nonzero integer vector. The number of lattice
points on the closed segment joining $ (0,0)$  and $ u$  equals $ \gcd(u_1,u_2)+1$.
\end{lemma}

\begin{proof}
By elementary lattice geometry: the points on the segment correspond to integer
multiples $ \frac{k}{\ell}u$  with $ 0\le k\le\ell$  where $ \ell$  is the largest
integer so that $ \frac{1}{\ell}u\in\mathbb Z^2$. That largest $ \ell$  is
$ \gcd(u_1,u_2)$.
\end{proof}

\begin{proposition}
\label{propb}
With the notation above, the number $ B$  of lattice points on the boundary
$ \partial\mathcal P$  of the parallelogram $ \mathcal P$  equals
$$
B \;=\; 2\big(\gcd(a-1,c)+\gcd(b,d-1)\big).
$$
\end{proposition}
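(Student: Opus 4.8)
The plan is to decompose the boundary $\partial\mathcal{P}$ into its four edges, count lattice points edge by edge using Lemma~\ref{lem:segment-gcd}, and then correct by inclusion--exclusion for the points shared between adjacent edges. Since $A$ has integer entries, both column vectors $v=(a-1,c)^{T}$ and $w=(b,d-1)^{T}$ lie in $\mathbb{Z}^2$; hence the four vertices $0,\ v,\ w,\ v+w$ of $\mathcal{P}$ are lattice points, and every edge is a segment between two integer points to which Lemma~\ref{lem:segment-gcd} applies directly. I would work under the standing assumption $\det(A-I)\neq 0$, so that $v$ and $w$ are linearly independent and $\mathcal{P}$ is a genuine, nondegenerate parallelogram.

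First I would count lattice points on each edge. By Lemma~\ref{lem:segment-gcd}, the edge from $0$ to $v$ carries $\gcd(a-1,c)+1$ lattice points and the edge from $0$ to $w$ carries $\gcd(b,d-1)+1$. The two opposite edges, from $w$ to $v+w$ and from $v$ to $v+w$, are integer translates (by $w$ and by $v$, respectively) of the first two; since translation by a vector of $\mathbb{Z}^2$ preserves the lattice, these edges carry the same counts $\gcd(a-1,c)+1$ and $\gcd(b,d-1)+1$.

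Next I would assemble the total. Summing the four edge counts gives $2(\gcd(a-1,c)+1)+2(\gcd(b,d-1)+1)$, but in a nondegenerate parallelogram the only points common to two edges are the four vertices, each lying on exactly two adjacent edges and hence counted twice. Subtracting the four vertices once each yields
\[
B \;=\; 2\bigl(\gcd(a-1,c)+\gcd(b,d-1)\bigr)+4-4 \;=\; 2\bigl(\gcd(a-1,c)+\gcd(b,d-1)\bigr),
\]
as claimed.

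The one place the argument could slip -- and thus the step requiring genuine care -- is the assertion that edges meet only at vertices. This is exactly where nondegeneracy is used: because $v$ and $w$ are linearly independent, opposite edges lie on distinct parallel lines and share no point, while adjacent edges lie on lines through a common vertex with independent directions and therefore meet only at that vertex. Ruling out any accidental non-vertex lattice point shared between two edges is what keeps the inclusion--exclusion count exact, and it is precisely here that the hypothesis $\det(A-I)\neq 0$ is indispensable; if $1$ were an eigenvalue, $\mathcal{P}$ would collapse to a segment and the stated formula would no longer hold.
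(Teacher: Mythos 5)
Your proof is correct and follows essentially the same route as the paper's: count lattice points on each of the four edges via Lemma~\ref{lem:segment-gcd}, using that opposite edges are integer translates, then subtract the four doubly-counted vertices. Your explicit verification that edges of the nondegenerate parallelogram meet only at vertices (under $\det(A-I)\neq 0$) is a careful addition the paper leaves implicit, but it does not change the argument.
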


\begin{proof}
The four sides of $ \mathcal P$  are parallel to the segments determined by
$ v$  and $ w$. By Lemma \ref{lem:segment-gcd}, each segment parallel to $ v$ 
contains $ \gcd(a-1,c)+1$  lattice points (including its two endpoints), and each
segment parallel to $ w$  contains $ \gcd(b,d-1)+1$  lattice points. Summing over
the four sides gives $ 2(\gcd(a-1,c)+\gcd(b,d-1))+4$  counts, but each of the
four vertices was counted twice in this sum; subtracting the overcount yields
the asserted formula $ B=2(\gcd(a-1,c)+\gcd(b,d-1))$.
\end{proof}

The next theorem gives an arithmetic condition on the existence of fixed points. 

\begin{theorem}\label{fixed-point}
Let $ A=\begin{pmatrix}a&b\\ c&d\end{pmatrix}\in GL(2,\mathbb Z)$, and assume
that $ 1$  is not an eigenvalue of $ A ~(\text{equivalently,}~ \det(A-I)\neq 0)$. Let
$$
g_1:=\gcd(a-1,c),\qquad g_2:=\gcd(b,d-1).
$$
If
\begin{equation}\label{eqarea}
\det(A-I) \;\ge\; g_1+g_2,
\end{equation}
then for every translation vector $ \overline{b}\in\mathbb T^2$,  the affine map
$ f_{A,\overline{b}}(\bar{x})=A\bar{x}+\overline{b}~(\text{mod}~\mathbb{Z}^2)$  has a fixed point in $ \mathbb T^2$.
\end{theorem}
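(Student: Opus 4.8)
The plan is to translate the fixed-point problem into a lattice-point problem and then feed it into Pick's Theorem together with the boundary count of Proposition~\ref{propb}. First I would rewrite the fixed-point condition $f_{A,\overline{b}}(\bar x)=\bar x$ as $(A-I)\bar x\equiv -\overline{b}\pmod{\mathbb Z^2}$. Lifting to $\mathbb R^2$ and taking the representative $x\in[0,1)^2$, a fixed point of $f_{A,\overline{b}}$ corresponds exactly to an integer vector $n\in\mathbb Z^2$ lying in the translated parallelogram $\mathcal P+b$, where $b$ is any lift of $\overline{b}$ and $\mathcal P=(A-I)([0,1)^2)$. Thus the goal becomes: every translate $\mathcal P+b$ meets $\mathbb Z^2$.

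The arithmetic input comes from Pick's Theorem applied to the closed parallelogram $\overline{\mathcal P}$, whose four vertices $0,v,w,v+w$ are lattice points because the entries of $A-I$ are integers. Writing $I$ and $B$ for its interior and boundary lattice-point counts, Pick gives $\operatorname{Area}(\overline{\mathcal P})=I+\frac{B}{2}-1$. Here $\operatorname{Area}(\overline{\mathcal P})=|\det(A-I)|=\det(A-I)$; note the hypothesis already forces positivity, since $g_1,g_2\ge 1$ (a vanishing $\gcd$ would make a column of $A-I$ zero, contradicting $\det(A-I)\neq0$), whence $\det(A-I)\ge g_1+g_2\ge 2$. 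Substituting $B=2(g_1+g_2)$ from Proposition~\ref{propb} and solving yields
\[
I=\det(A-I)-(g_1+g_2)+1\ \ge\ 1,
\]
so $\overline{\mathcal P}$ contains at least one interior lattice point $p=(A-I)x_0$ with $x_0\in(0,1)^2$.

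It remains to convert this into a fixed point for every $\overline{b}$. The interior point $p$ immediately gives $(A-I)\bar x_0=\bar 0$, i.e.\ a fixed point of the linear map $f_{A,\bar 0}$. To reach arbitrary $\overline{b}$ I would use that $\mathcal P$ is a fundamental domain for the sublattice $(A-I)\mathbb Z^2\subseteq\mathbb Z^2$ of index $\det(A-I)$: any translate of a fundamental domain is again one, so $\mathcal P+b$ contains exactly $\det(A-I)\ge 1$ points of $\mathbb Z^2$, producing a fixed point for each $\overline{b}$. Equivalently, one may invoke surjectivity of $A-I$ on $\mathbb T^2$ from Lemma~\ref{onto endomorphism}. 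The interior-point count then records the bonus promised in the introduction: it pins a fixed point to the interior of the fundamental square and bounds the number of distinct lifts.

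The main obstacle is precisely this last bridge. Pick's count is a statement about the single fixed parallelogram $\overline{\mathcal P}$, and most directly delivers a fixed point only in the homogeneous case $\overline{b}=\bar0$; upgrading to all $\overline{b}$ requires the translation-invariance of the lattice-point count in $\mathcal P+b$, which is where the fundamental-domain (index) observation and careful half-open boundary bookkeeping enter. I would also flag that the hypothesis $\det(A-I)\ge g_1+g_2$ is sufficient but not necessary for existence; its real role is to guarantee an \emph{interior} lattice point, making the argument constructive and quantitative rather than merely topological.
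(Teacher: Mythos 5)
Your proposal is correct, and its first two steps coincide with the paper's proof: the same reduction of the fixed-point equation to the statement that $-\overline{b}$ is represented modulo $\mathbb{Z}^2$ in $\mathcal{P}=(A-I)([0,1)^2)$, and the same application of Pick's Theorem with the boundary count $B=2(g_1+g_2)$ from Proposition~\ref{propb}, yielding the interior count $\det(A-I)-(g_1+g_2)+1\ge 1$. Where you genuinely diverge is the final bridge from ``one interior lattice point of the fixed parallelogram'' to ``a fixed point for every $\overline{b}$''. The paper stays inside elementary lattice geometry: it sets $\zeta=p$ and argues, in the remark accompanying Figure~\ref{fig: rep}, that a full unit square of residue representatives can be assembled inside $\mathcal{P}$ around the interior point by translating boundary triangles by lattice vectors; this keeps the proof Pick-only, consistent with the introduction's stated aim of avoiding topological machinery, though that assembly step is only sketched pictorially. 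You instead close via the tiling/index argument: $\mathcal{P}+b$ is a fundamental domain for $(A-I)\mathbb{Z}^2$, so it contains exactly $[\mathbb{Z}^2:(A-I)\mathbb{Z}^2]=|\det(A-I)|$ lattice points. Your bridge is fully rigorous precisely where the paper's last step is informal, but --- as you correctly flag yourself --- it proves the conclusion for every $A$ with $\det(A-I)\neq 0$, making the hypothesis $\det(A-I)\ge g_1+g_2$ and hence the entire Pick computation logically redundant for bare existence; this mirrors the paper's own admission in the introduction that existence already follows from invertibility of $A-I$, the theorem's purpose being the explicit, quantitative Pick-based criterion. Your side observation that $g_1,g_2\ge 1$ (a vanishing gcd would force a zero column of $A-I$), so the hypothesis forces $\det(A-I)\ge 2>0$ and the absolute value in the area is harmless, is a detail the paper leaves implicit.
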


\begin{proof}
A fixed point $ \bar{x}\in\mathbb T^2$  for $ f_{A,\overline{b}}$  satisfies the congruence
$$
(A-I)\bar{x} \equiv -\overline{b} \pmod{\mathbb Z^2},
$$
i.e., there exists an integer vector $ \zeta\in\mathbb Z^2$  and a representative
$ \tilde x\in[0,1)^2\subset\mathbb R^2$  with
$$
(A-I)\tilde x = \zeta - \overline{b_0},
$$
where $ \overline{b_0}$  is a chosen lift of $ \overline{b}$  to $ \mathbb R^2$. Thus $ -\overline{b}$  has a
representative in the image $ (A-I)([0,1)^2)=\mathcal P$  if and only if the
map $ f_{A,\overline{b}}$  has a fixed point.

By Proposition \ref{propb}, the number $ B$  of lattice points on
$ \partial\mathcal P$  equals $ 2(g_1+g_2)$. Applying Pick's Theorem to the lattice polygon $ \mathcal P$  (which is a parallelogram with integer vertices) gives the relation
$$
N \;=\; \operatorname{Area}(\mathcal P) - \frac{B}{2} + 1,
$$
where $ N$  is the number of integer points in the interior of $ \mathcal P$.
If $ \operatorname{Area}(\mathcal P)\ge \frac{B}{2}$  then $ N\ge 0+1-0=1$,
so $ \mathcal P$  contains at least one interior lattice point. The inequality
$ \operatorname{Area}(\mathcal P)\ge \frac{B}{2}$  is equivalent to
$ \det(A-I)\ge g_1+g_2$, which is exactly \eqref{eqarea}.

An interior lattice point $ p\in\mathcal P\cap\mathbb Z^2$  gives $ \zeta=p$ 
and hence provides a solution $ \tilde x\in[0,1)^2$  of $ (A-I)\tilde x=\zeta-\overline{b_0}$,
so $ -\overline{b}$  is represented in $ \mathcal P$  and the affine map $ f_{A,\overline{b}}$  has a
fixed point. 

\end{proof}

\begin{remark}

A geometric visualization of the proof of Theorem \ref{fixed-point} is shown in Figure~\ref{fig: rep}.  
If the interior lattice point lies fully inside a unit square, we are done immediately.  
If it lies in a triangular region along the boundary, one can translate this triangle by a lattice vector into the parallelogram, forming a full unit square and thus a representative for $-\overline{b}$. 

\begin{figure}[hbt!]
    \centering
    \includegraphics[scale=.6]{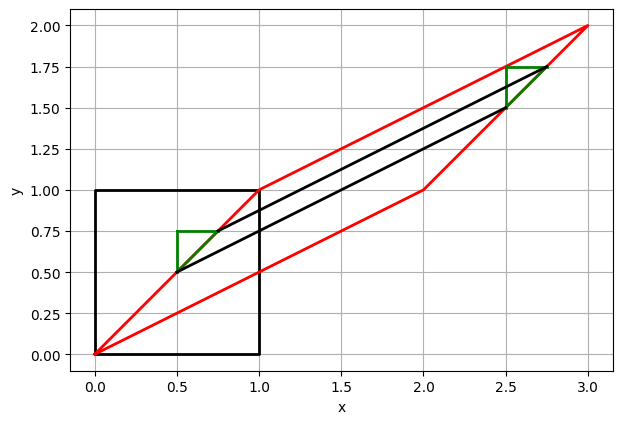}
    \caption{\small Recovering a representative of $-\overline{b}$ inside the parallelogram.}
    \label{fig: rep}
\end{figure}
\end{remark}

\begin{remark}
 Since $ \det(A-I)=\det(A)-\operatorname{tr}(A)+1$, the inequality
      \eqref{eqarea} can be rewritten entirely in terms of $ \det A$ 
      and $ \operatorname{tr}A$  together with the two gcds $ g_1,g_2$.
\end{remark}

\begin{remark}
If 1 is not an eigenvalue of $A$ then the affine map $ f_{A,\overline{b}}$ is conjugate to the linear map $f_A$ and this conjugacy does not depend on the fact whether any component of the translation vector $\overline{b}$ is irrational or not. In a way the Theorem \ref{fixed-point} gives a little more information on the existence of fixed points directly. This result can also be used to determine the existence of periodic points of an affine map by iterating the original one, given the iteration satisfies the conditions of Theorem \ref{fixed-point}.         
\end{remark}

\begin{example}
    Now we will produce a class of examples  that satisfy the condition of Theorem \ref{fixed-point}. So we can actually  apply the theorem for this class of infinitely many matrices. Consider the sequence of matrices
\[
A_n \;=\;
\begin{pmatrix}
-n & 1 \\[6pt]
1 & 0
\end{pmatrix}
\in GL(2,\mathbb{Z}).
\]
Then
\[
A_n - I
=
\begin{pmatrix}
-n-1 & 1 \\[6pt]
1 & -1
\end{pmatrix},
\]
and therefore
\[
\det(A_n - I)
=n.
\]

Hence
\[
\det(A_n - I) \;=\; n \;\to\; \infty, \text{as} ~n\to \infty.
\]

Next, compute both the gcds:
\[
g_1=\gcd(-n - 1,\, 1)= 1,
\]
\[
g_2=\gcd(1,\, -1)= 1.
\]

Therefore
\[
\det(A_n - I) - g_1- g_2=n - 1 - 1=n - 2,
\]
and consequently
\[
\det(A_n - I) - g_1 - g_2
\;\to\;
\infty, ~\text{as}~n\to \infty.
\]

\end{example}

\begin{remark}
When $A\in  GL(2,\mathbb{Z})$ has $1$ as an eigenvalue, it is possible that $f_{A,\overline{b}}$ has no fixed point for every nonzero $\overline{b}=(b_1,b_2)^T\in \mathbb{T}^2.$ As an example, for the matrix  $
A = \begin{pmatrix} 1 & 1 \\ 0 & 1 \end{pmatrix}
$, the map $f_{A,\overline{b}}$ to have a fixed point, it is necessary that $b_2\equiv 0 ~(\text{mod}~\mathbb{Z}).$ 
\end{remark}

\section{Number of Conjugacy classes in a Similarity Class}\label{number} 
In this section, we focus on the algebraic properties of the group $G$.
It is known from \cite{ATW} that the action of $GL(2,\mathbb{Z})$ on the upper half-plane has only finitely many conjugacy classes inside each similarity class.
A similar phenomenon occurs for the affine action on $\mathbb{T}^2$. First note that by similarity we mean the following. 
\begin{definition}\label{sim} Let $(A,\overline{a})~\text{and}~(B,\overline{b}) ~\text{be elements of}~ G$.  
We say that $(A,\overline{a})$ and $(B,\overline{b})$ are \emph{similar} if there exist 
$C \in GL(2,\mathbb{R})$ and $\overline{c} \in \mathbb{T}^{2}$ such that 
$$(C,\overline{c})(A,\overline{a})(C,\overline{c})^{-1}=(B,\overline{b}).$$  

If the element $C$ can be chosen from  $GL(2,\mathbb{Z})$, then 
$(A,\overline{a})$ and $(B,\overline{b})$ are said to be \emph{conjugate}. We say that $(A,\bar a)\in G$ is conjugate to a \emph{linear map} if it is
conjugate in $G$ to $(A,\bar{0})$.
\end{definition}

We determine the number of conjugacy classes contained in a given similarity class in $G$.  
The next result describes how many conjugacy classes lie in a given similarity class.
This depends entirely on whether the matrix $A \in GL(2,\mathbb{Z})$ has $1$ as an eigenvalue.
\begin{theorem} \label{orbit counting}

Let $(A,\bar a)\in G$. Then the following statements hold:

\begin{enumerate}
    \item If $1$ is not an eigenvalue of $A$, then the similarity
    class of $(A,\bar a)$ contains only finitely many conjugacy classes in $G$.

    \item Assume that $1$ is an eigenvalue of $A$ and $A\neq I$. Then the similarity class of  $(A,\bar a)$ contains uncountably many conjugacy classes.
 \iffalse
    \begin{itemize}
    
      \item   \textcolor{red}{ If $(A,\bar a)$ is conjugate to a linear map in $G$, then its
       similarity class contains finitely many conjugacy classes. }
      
       \item \textcolor{red}{If $(A,\bar a)$ is not conjugate to a linear map in $G$, then its similarity class contains uncountably many conjugacy classes.}

    \end{itemize}
    \fi
\end{enumerate}
\end{theorem}

\begin{proof}
We view $G$ inside the affine group $GL(2,\mathbb R)\ltimes \mathbb R^2$
via lifts. We say that $(A,\bar a)$ and $(A',\bar a')$ in $G$ are
$GL(2,\mathbb R)$–similar if there exists $(C,\widehat{c})\in
GL(2,\mathbb R)\ltimes \mathbb R^2$ and lifts $\widehat{a},\widehat{a'}\in\mathbb R^2$ of
$\bar a,\bar a'$ such that
\[
(A',\widehat{a'}) = (C,\widehat{c})^{-1}(A,\widehat{a})(C,\widehat{c})
\]
in $GL(2,\mathbb R)\ltimes\mathbb R^2$.

Conjugation in $G$ by $(B,\bar b)\in G$ is given by
\[
(A,\bar a)\mapsto
\bigl(B^{-1}AB,\; B^{-1}\bar a + B^{-1}(A-I)\bar b\bigr),
\]
where $\bar b\in\mathbb T^2$ is represented by a vector in $\mathbb R^2$ and the
computation is performed modulo $\mathbb Z^2$.

Fix the matrix $A$. Then two elements $(A,\bar a)$ and $(A,\bar a')$ are conjugate in $G$
if and only if there exist $B\in Z_{GL(2,\mathbb Z)}(A)$, the centralizer of $A$ in the group $GL(2,\Z)$ and
$\bar b\in\mathbb T^2$ such that
\[
\bar a' = B^{-1}\bar a + B^{-1}(A-I)\bar b .
\]
In particular, modulo the action of the centralizer of $A$,
conjugacy classes in $G$ with linear part $A$ are parametrized by the quotient 
\[
\mathbb T^2 / (A-I)(\mathbb T^2).
\]
%\medskip
\noindent
\textbf{Case 1.} Assume first that $1$ is not an eigenvalue of $A$.
Then $\det(A-I)\neq 0$, so $A-I$ is invertible and induces
a surjective endomorphism of $\mathbb T^2$. Hence
$(A-I)(\mathbb T^2)=\mathbb T^2$, and every $(A,\bar a)$ is conjugate
to $(A,\bar 0)$. Conjugacy classes in the similarity class are therefore determined
only by the integer conjugacy class of the matrix $A$, and from \cite{ATW} there are only finitely many such classes.

\medskip
\noindent
\iffalse
\textcolor{red}{\textbf{Case 2.} Assume that $A\in GL(2,\mathbb Z)$ has $1$ as an eigenvalue and
$A\neq I$. Then the linear map $A-I$ has rank one.
\medskip
\noindent
\textit{Step 1.  Conjugacy classes with fixed linear part.}
Define
\begin{equation}\label{eq:H}
H := (A-I)(\mathbb T^2) \subset \mathbb T^2.
\end{equation}
Since $A-I$ has rank one, $H$ is a closed, connected, one--dimensional subgroup
of $\mathbb T^2$. Therefore the quotient
\begin{equation}\label{eq:Q}
Q := \mathbb T^2/H
\end{equation}
is a one--dimensional torus, and hence $Q\cong S^1$. Conjugacy classes in $G$ with linear part $A$ are parametrized by elements of $Q$.
\noindent
\textit{Step 2. Elements conjugate to a linear map.}
The element $(A,\bar a)$ is conjugate in $G$ to $(A,\bar 0)$ if and only if
\begin{equation}\label{eq:linear}
\bar a \in H,
\end{equation}
that is, if and only if $[\bar a]=[\bar 0]$ in $Q$.
In this case, every element in the similarity class of $(A,\bar a)$ is conjugate to a linear map, and conjugacy classes are determined solely by the integer conjugacy class of the matrix $A$. Since there are only finitely many
such classes (from \cite {ATW}), the similarity class contains only finitely many conjugacy
classes in $G$.}
\fi
\medskip
\noindent
\iffalse
\textcolor{red}{\textit{Step 3. Elements not conjugate to a linear map.}
Now suppose that $(A,\bar a)$ is not conjugate to $(A,\bar 0)$, so that
\begin{equation}\label{eq:nonzero}
[\bar a]\neq [\bar 0] \quad \text{in } Q.
\end{equation}}
\textcolor{red}{For each $t\in\mathbb R\setminus\{0\}$, let
\begin{equation}\label{eq:Ct}
C_t := tI \in GL(2,\mathbb R).
\end{equation}
Since $C_t$ commutes with $A$, we have
\begin{equation}\label{eq:similarity}
(C_t,\bar 0)(A,\bar a)(C_t,\bar 0)^{-1} = (A,t\bar a).
\end{equation}
Thus all elements $(A,t\bar a)$ lie in the similarity class of $(A,\bar a)$.}

\textcolor{red}{Finally, we must count conjugacy classes inside the similarity class of $(A,\bar a)$
with respect to conjugation in the affine group
\[
G = GL(2,\mathbb Z)\ltimes T^2.
\]
As shown above, similarity produces an uncountable subset
\[
S = \{[t\bar a] : t\in\mathbb R\} \subset Q.
\]
Conjugation in $G$ by an element $(B,\bar b)\in G$ sends $(A,\bar a)$ to
\[
(B^{-1}AB,\; B^{-1}\bar a + B^{-1}(A-I)\bar b),
\]}
\textcolor{red}{and since we are fixing the linear part $A$, only elements $B$ in the centralizer
$Z_{GL(2,\mathbb Z)}(A)$ act on $Q$.
Thus conjugacy classes with linear part $A$ correspond to orbits of the countable group
$Z_{GL(2,\mathbb Z)}(A)$ acting on $Q$.
Since $Z_{GL(2,\mathbb Z)}(A)$ is countable, each orbit is countable. 
But $S$ is uncountable, so it must contain uncountably many distinct orbits.}
\fi 
\noindent
\textbf{Case 2.}
 Recall that  conjugacy classes in $G$ with linear part $A$ are parametrized by the quotient $\mathbb T^2 / (A-I)(\mathbb T^2)$. If $1$ is an eigenvalue of $A$ then the image of $(A-I)$ is at most one dimensional in $\mathbb{R}^2$, upon projection on $\mathbb{T}^2$ this becomes union of at most countably many line segments in the unit square as the  fundamental domain. 
 
 Since $GL(2,\mathbb{Z})$ is countable, the set of translation parameters in $\mathbb{T}^2$ corresponding to elements conjugate to $(A,\bar a)$ is a countable union of at most countable union of line segments, hence a countable union of nowhere dense subsets of $\mathbb{T}^2$. By the Baire category theorem this union cannot cover $\mathbb{T}^2$, so its complement is uncountable. Therefore the similarity class of $(A,\bar a)$ contains uncountably many conjugacy classes in $G$.\end{proof}

\section{Reversible and Strongly Reversible Elements of $GL(2,\mathbb{Z})\ltimes \mathbb{T}^{2}$}\label{affine}

Reversibility in affine toral dynamics couples the classical notion of matrix reversibility with translation dynamics on the torus.

\begin{definition}
The affine map \(f_{A,\overline{a}}\) is \emph{reversible} if there exists \((R,\overline{r})\in G\) such that
$$
(R,\overline{r})\, (A,\overline{a})\, (R,\overline{r})^{-1}=(A,\overline{a})^{-1}.
$$
If one can choose \((R,\overline{r})\) with \((R,\overline{r})^2=(I,\bar{0})\) (equivalently, \(R^2=I\) and \((R+I)\overline{r}\equiv 0~ (\text{mod}~\mathbb{Z}^2)\)),
then \(f_{A,\overline{a}}\) is \emph{strongly reversible}.
\end{definition}

Our study of the reversibility of affine maps begins with the following lemma.

\begin{lemma}
Let $A,R \in GL(2, \mathbb Z)$ and $\overline{a},\overline{r} \in \mathbb T^2 $.
Assume that $RAR^{-1} = A^{-1}$. 
Then the congruence
\begin{equation}\label{eq}
- R A R^{-1} \overline{r} + R\overline{a} + \overline{r} \equiv -A^{-1} \overline{a} \pmod{\mathbb Z^2}
\end{equation}
is equivalent to
\begin{equation}\label{eq2}
(AR+I)\overline{a} + (A-I)\overline{r} \equiv \bar{0} \pmod{\mathbb Z^2}.
\end{equation}
\end{lemma}

\begin{proof}
Using the conjugacy relation $RAR^{-1} = A^{-1}$, the congruence
\eqref{eq} becomes
$$
- A^{-1} \overline{r} + R\overline{a} + \overline{r} \equiv -A^{-1} \overline{a} \pmod{\mathbb Z^2}.
$$
Bringing all terms to the left-hand side gives
$$
- A^{-1} \overline{r} + R\overline{a} + \overline{r} + A^{-1} \overline{a} \equiv \bar{0} \pmod{\mathbb Z^2}.
$$
$$
\Rightarrow (R\overline{a} + A^{-1}\overline{a}) + (\overline{r} - A^{-1} \overline{r}) \equiv \bar{0} \pmod{\mathbb Z^2} 
$$
This implies
\begin{equation}\label{eqq}
(R + A^{-1})\overline{a} + (I - A^{-1})\overline{r} \equiv \bar{0} \pmod{\mathbb Z^2}.
\end{equation}

Since $A \in GL(2, \mathbb Z)$, multiplication of congruences by $A$
preserves equivalence modulo $\mathbb Z^2$.
Multiplying \eqref{eqq} on the left by $A$ yields
$$
A(R + A^{-1})\overline{a}+ A(I - A^{-1})\overline{r} \equiv \bar{0} \pmod{\mathbb Z^2}.
$$
Thus,
$$
AR\overline{a} + \overline{a} + A\overline{r} - \overline{r} \equiv \bar{0} \pmod{\mathbb Z^2}.
$$
Rearranging gives
$$
(AR\overline{a} + \overline{a}) + (A\overline{r} - \overline{r}) \equiv \bar{0} \pmod{\mathbb Z^2}.
$$
So,
$$
(AR + I)\overline{a} + (A - I)\overline{r} \equiv \bar{0} \pmod{\mathbb Z^2}.
$$

Next, we have the congruence $(5).$ Now multiplying $(5)$ on the left by $A^{-1}$ gives
\[R\overline{a}+A^{-1}\overline{a}+\overline{r}-A^{-1}\overline{r}\equiv \bar{0} \pmod{\mathbb Z^2}.\]
 Using $RAR^{-1}=A^{-1},$ we have the congruence $(4).$ Thus, the lemma follows.
\end{proof}

The following theorem describes the reversibility of affine maps in terms of the linear part and the translation part.

\begin{theorem}\label{thm:affine-reversible}
Let $(A,\overline{a})\in G$.  Then \(f_{A,\overline{a}}\) is reversible
if and only if there exists \((R,\overline{r})\in G\) satisfying
$$
RAR^{-1}=A^{-1}
\qquad\text{and}\qquad
(AR + I)\overline{a} + (A - I)\overline{r}\equiv \bar{0} \pmod{\mathbb{Z}^2}.
$$
Consequently:
\begin{enumerate}
  \item If $1$ is \emph{not} an eigenvalue of $A$, then 
$f_{A,\overline{a}}$ is reversible if and only if $A$ is reversible in $GL(2,\mathbb{Z})$.

\item If \(1\) is an eigenvalue of \(A\), then \(f_{A,\overline{a}}\) is reversible
  precisely when there exists \(R\in GL(2,\mathbb{Z})\) with \(RAR^{-1}=A^{-1}\)
  such that the linear congruence
  $$
  (AR+I)\overline{a} \in {\operatorname{Im}}(A-I)\quad\text{$($in }\mathbb{T}^2\text{$)$}
  $$
  holds; equivalently, for that reversing \(R\) there exists \(\overline{r}\in\mathbb{T}^2\)
  satisfying $$(A-I)\overline{r}+(AR+I)\overline{a}\equiv \bar{0} ~(\text{mod}~ \mathbb Z^2)$$
\end{enumerate}
\end{theorem}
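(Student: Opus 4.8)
The plan is to reduce the whole statement to the preceding lemma together with a surjectivity analysis of the torus endomorphism induced by $A-I$. First I would unwind the definition of reversibility using the group law in $G$. A direct computation gives
\[
(R,\overline{r})(A,\overline{a})(R,\overline{r})^{-1}
=\bigl(RAR^{-1},\,-RAR^{-1}\overline{r}+R\overline{a}+\overline{r}\bigr),
\]
while $(A,\overline{a})^{-1}=(A^{-1},-A^{-1}\overline{a})$. Equating the two coordinates shows that $f_{A,\overline{a}}$ is reversible if and only if there is $(R,\overline{r})\in G$ with $RAR^{-1}=A^{-1}$ and $-RAR^{-1}\overline{r}+R\overline{a}+\overline{r}\equiv -A^{-1}\overline{a}\pmod{\mathbb{Z}^2}$, which is precisely congruence \eqref{eq}. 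Since the matrix relation $RAR^{-1}=A^{-1}$ is exactly the hypothesis of the preceding lemma, that lemma lets me replace \eqref{eq} by the equivalent congruence \eqref{eq2}, namely $(AR+I)\overline{a}+(A-I)\overline{r}\equiv 0\pmod{\mathbb{Z}^2}$. This establishes the main equivalence.

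For the two consequences I would read \eqref{eq2}, once a reversing $R$ is fixed, as the congruence $(A-I)\overline{r}\equiv-(AR+I)\overline{a}\pmod{\mathbb{Z}^2}$ to be solved for $\overline{r}$, and analyze the image of the endomorphism $\overline{r}\mapsto(A-I)\overline{r}$ of $\mathbb{T}^2$. In case (1), where $1$ is not an eigenvalue, the forward direction is immediate: any reversing element of $G$ has a linear part $R$ with $RAR^{-1}=A^{-1}$, so $A$ is reversible in $GL(2,\mathbb{Z})$. For the converse, $\det(A-I)\neq 0$ and $A-I\in M(2,\mathbb{Z})$, so by Lemma~\ref{onto endomorphism} the map $\overline{r}\mapsto(A-I)\overline{r}$ is a surjective self-map of $\mathbb{T}^2$; hence for any reversing $R$ and any $\overline{a}$ the congruence \eqref{eq2} is solvable in $\overline{r}$, and $f_{A,\overline{a}}$ is reversible. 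Thus reversibility of $f_{A,\overline{a}}$ reduces exactly to reversibility of $A$.

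In case (2), where $1$ is an eigenvalue, $\det(A-I)=0$, so the image of $\overline{r}\mapsto(A-I)\overline{r}$ in $\mathbb{T}^2$ is a proper closed subgroup, which I would identify with $\operatorname{Im}(A-I)$ projected to $\mathbb{T}^2$ (it is one-dimensional and closed because the columns of $A-I$ are integer vectors). Solvability of \eqref{eq2} in $\overline{r}$ is then equivalent to the membership $-(AR+I)\overline{a}\in\operatorname{Im}(A-I)$, and since $\operatorname{Im}(A-I)$ is a subgroup the sign is irrelevant, yielding the stated condition $(AR+I)\overline{a}\in\operatorname{Im}(A-I)$ in $\mathbb{T}^2$. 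Combining this with the requirement that $R$ reverse $A$ gives the asserted criterion.

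I expect the only genuinely delicate point to be case (2): unlike case (1), not every reversing $R$ need satisfy the membership condition, because distinct reversing matrices can move $(AR+I)\overline{a}$ into different cosets of $\operatorname{Im}(A-I)$. For this reason the criterion must be stated as the \emph{existence} of a reversing $R$ with $(AR+I)\overline{a}\in\operatorname{Im}(A-I)$, and care is needed to verify that the image of the torus endomorphism is exactly $\operatorname{Im}(A-I)$ viewed in $\mathbb{T}^2$, so that membership is both necessary and sufficient for solving \eqref{eq2}.
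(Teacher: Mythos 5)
Your proposal is correct and takes essentially the same route as the paper: the identical conjugation computation in $G$, reduction via the preceding lemma to the congruence $(AR+I)\overline{a}+(A-I)\overline{r}\equiv 0 \pmod{\mathbb{Z}^2}$, and the same case split on whether $1$ is an eigenvalue, with case (2) reduced to membership of $(AR+I)\overline{a}$ in $\operatorname{Im}(A-I)$ viewed in $\mathbb{T}^2$. The only cosmetic difference is in case (1), where you argue via surjectivity of the induced toral endomorphism (Lemma~\ref{onto endomorphism}) while the paper writes the explicit solution $\overline{r}=-(A-I)^{-1}(AR+I)\overline{a}$; your phrasing is in fact slightly safer, since on $\mathbb{T}^2$ that solution is unique only up to the kernel of the endomorphism, which has $|\det(A-I)|$ elements.
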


\begin{proof}
Compute conjugation in \(G\).  The inverse of \((R,\overline{r})\) is \((R^{-1},-R^{-1}\overline{r}~(\text{mod}~\mathbb{Z}^2))\). Using
\((X,\overline{x})(Y,\overline{y})=(XY,X\overline{y}+\overline{x}~(\text{mod}~\mathbb{Z}^2))\) we get
$$
(R,\overline{r})(A,\overline{a})(R,\overline{r})^{-1}
= \bigl(RAR^{-1},\; -RAR^{-1}\overline{r} + R\overline{a} + \overline{r}~(\text{mod}~\mathbb{Z}^2) \bigr).
$$
Indeed, acting on \(\bar{x}\in\mathbb{T}^2\) gives
$$
(R,\overline{r})\bigl( A(R^{-1}\bar{x} - R^{-1}\overline{r}) + \overline{a} \bigr)
= R A R^{-1} \bar{x} \;-\; R A R^{-1} \overline{r} \;+\; R \overline{a} \;+\; \overline{r},
$$
so the linear part is \(RAR^{-1}\) and the translation part is \(-RAR^{-1}\overline{r} + R\overline{a} + \overline{r}\).

Requiring that this equals the inverse \((A,\overline{a})^{-1}=(A^{-1},-A^{-1}\overline{a}~(\text{mod}~\mathbb{Z}^2))\) gives the two
conditions
$$
RAR^{-1}=A^{-1},
\qquad\text{and}\qquad
- R A R^{-1} \overline{r} + R \overline{a} + \overline{r} \equiv -A^{-1} \overline{a} \pmod{\mathbb{Z}^2}.
$$
Replacing \(RAR^{-1}\) by \(A^{-1}\) in the second equality and rearranging yields
the equivalent linear congruence
$$
(AR + I)\overline{a} + (A - I)\overline{r}\equiv \bar{0}\pmod{\mathbb{Z}^2}.
$$
Thus \(f_{A,\overline{a}}\) is reversible iff there exists \(R\in GL(2,\mathbb{Z})\) with \(RAR^{-1}=A^{-1}\) for which
the above congruence is solvable for \(\overline{r}\in \mathbb{T}^2\).  This proves the main equivalence.

\medskip\noindent
\textbf{Case 1:}
If $1$ is not an eigenvalue of $A$, then $A-I$ is invertible on $\mathbb{R}^2$, hence 
the equation \eqref{eq2} has the unique solution
$$
\overline{r} = -(A-I)^{-1}(AR+I)\overline{a}.
$$
Thus reversibility of $f_{A,\overline{a}}$ is equivalent to existence of $R$ satisfying $RAR^{-1}=A^{-1}$ i.e., reversibility of $A$ in $GL(2,\mathbb{Z})$, and each such 
$R$ lifts uniquely to $(R,\overline{r})$.

\medskip\noindent
\textbf{Case 2:}
If $1$ is an eigenvalue of $A$, then $A-I$ is not invertible.  
Then equivalence \eqref{eq2} has a solution $\overline{r}$ if and only if
$$(AR+I)\overline{a} \in {\rm Im}(A-I).
$$
Thus reversibility holds precisely when there exists $R$ with
$RAR^{-1}=A^{-1}$
and the compatibility condition \eqref{eq2} holds.

Putting all of these together, the theorem follows.
\end{proof}

\begin{corollary}\label{5sr}
 The affine map $f_{A,\overline{a}}$ is strongly reversible if and only if
\begin{enumerate}
    \item $A$ is strongly reversible in $GL(2,\mathbb{Z})$, and
    \item for an involutive reverser $\tau$ with $\tau A\tau^{-1}=A^{-1}$,
    $$
    (A\tau +I)\overline{a}\in\operatorname{Im}(A-I).
    $$
\end{enumerate}
\end{corollary}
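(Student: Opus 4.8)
The plan is to deduce this directly from Theorem~\ref{thm:affine-reversible} by specializing the reverser $(R,\overline r)$ to an involution. Recall that $f_{A,\overline a}$ is strongly reversible precisely when it is reversed by some $(R,\overline r)\in G$ with $(R,\overline r)^2=(I,0)$, i.e. $R^2=I$ and $(R+I)\overline r\equiv0\pmod{\mathbb Z^2}$. By Theorem~\ref{thm:affine-reversible}, such an $(R,\overline r)$ reverses $(A,\overline a)$ if and only if $RAR^{-1}=A^{-1}$ together with $(AR+I)\overline a+(A-I)\overline r\equiv0\pmod{\mathbb Z^2}$. Necessity is then immediate: if $f_{A,\overline a}$ is strongly reversible, write its involutive reverser as $(\tau,\overline r)$; then $\tau^2=I$ and $\tau A\tau^{-1}=A^{-1}$, so $A$ is strongly reversible in $GL(2,\mathbb Z)$, which is condition (1), while the congruence $(A\tau+I)\overline a\equiv-(A-I)\overline r$ exhibits $(A\tau+I)\overline a$ as an element of $\operatorname{Im}(A-I)$, giving condition (2).

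For sufficiency I would argue in two stages. Condition (1) supplies an involution $\tau$ with $\tau A\tau^{-1}=A^{-1}$, and condition (2) says $(A\tau+I)\overline a\in\operatorname{Im}(A-I)$, so the congruence $(A-I)\overline r\equiv-(A\tau+I)\overline a$ admits a solution $\overline r_0$. By Theorem~\ref{thm:affine-reversible}, the pair $(\tau,\overline r_0)$ already reverses $(A,\overline a)$; the only remaining task is to arrange that this reverser is an involution. Writing $\overline s:=(\tau+I)\overline r_0$, one has $(\tau,\overline r_0)^2=(I,\overline s)$, and since the square of a reverser commutes with $(A,\overline a)$ a short computation gives $(A-I)\overline s\equiv0$, i.e. $\overline s\in\ker(A-I)$. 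Replacing $\overline r_0$ by $\overline r_0+\overline t$ with $\overline t\in\ker(A-I)$ preserves the reversing congruence, so it suffices to find such a $\overline t$ with $(\tau+I)\overline t\equiv-\overline s$; then $(\tau,\overline r_0+\overline t)$ is the desired involutive reverser.

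The main obstacle is precisely this last solvability, namely showing $\overline s\in(\tau+I)\ker(A-I)$ — that is, that condition (2) alone already forces the obstruction $\overline s$ into $(\tau+I)\ker(A-I)$. The structural inputs I would use are that $\ker(A-I)$ is $\tau$-invariant, since $\tau(A-I)\tau^{-1}=A^{-1}-I=-A^{-1}(A-I)$ has the same kernel as $A-I$, and that $\tau A$ is itself an involution, because $(\tau A)^2=(\tau A\tau)A=A^{-1}A=I$. Consequently, from $A+\tau=\tau(\tau A+I)$ we get $\det(A+\tau)=\det\tau\cdot\det(\tau A+I)=0$ whenever $A\neq\tau$ (the involution $\tau A$ then has $-1$ as an eigenvalue), so $A+\tau$ is rank-deficient and the relevant target vector is forced to lie in the one-dimensional direction that $(A-I)$ sweeps out on $\ker(\tau+I)$.

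To finish, I would reduce $\tau$ to one of the normal forms of Lemma~\ref{GL2Z-involutions} and invoke the corresponding shape of a strongly reversible $A$ from Theorem~\ref{GL2Z-strong} ($a=d$ or $b=-c$). With these, the two scalar conditions coming from $(\tau+I)\overline r\equiv0$ and $(A-I)\overline r\equiv-(A\tau+I)\overline a$ become simultaneously consistent: their resultant vanishes identically as a consequence of $\det A=\pm1$ (e.g. in the model case $\tau=\operatorname{diag}(1,-1)$, $A=\left(\begin{smallmatrix}a&b\\ c&a\end{smallmatrix}\right)$ one finds the two columns involved are parallel because $bc=a^2-1$). I expect this compatibility verification to be the only genuinely new point beyond plain reversibility, the remainder being a direct specialization of Theorem~\ref{thm:affine-reversible}; note also that when $1$ is not an eigenvalue of $A$ the condition $(A\tau+I)\overline a\in\operatorname{Im}(A-I)$ is vacuous, since $\operatorname{Im}(A-I)=\mathbb T^2$, so there the corollary simply reads ``$f_{A,\overline a}$ strongly reversible $\iff A$ strongly reversible.''
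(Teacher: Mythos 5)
Your diagnosis is sharper than the paper itself, which states Corollary~\ref{5sr} with no proof at all, treating it as an immediate specialization of Theorem~\ref{thm:affine-reversible}; you correctly observe that this is not immediate, because a reverser $(\tau,\overline r)$ supplied by that theorem must additionally satisfy $(\tau+I)\overline r\equiv 0\pmod{\mathbb Z^2}$ to be an involution of $G$. Your necessity argument is correct. But your sufficiency strategy contains a genuine gap at exactly the point you flag: the claim that condition (2) forces the obstruction $\overline s=(\tau+I)\overline r_0$ into $(\tau+I)\ker(A-I)$, so that the \emph{same} $\tau$ can always be promoted to an involutive reverser, is false. Take
\[
A=\begin{pmatrix}1&2\\0&1\end{pmatrix},\qquad
\tau=\begin{pmatrix}-1&-1\\0&1\end{pmatrix},\qquad
\overline a=\left(0,\tfrac12\right).
\]
Then $\tau^2=I$, $\tau A\tau^{-1}=A^{-1}$, and $(A\tau+I)\overline a=(\tfrac12,0)\in\operatorname{Im}(A-I)=\{(t,0):t\in\mathbb T\}$, so (1) and (2) hold for this $\tau$. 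Yet the reversing congruence $(A\tau+I)\overline a+(A-I)\overline r\equiv 0$ reads $2r_2\equiv\tfrac12$, forcing $r_2\in\{\tfrac14,\tfrac34\}$, while $(\tau+I)\overline r=(-r_2,2r_2)\equiv(0,0)$ forces $r_2=0$ in $\mathbb T$. So no $\overline r$ exists for this $\tau$; equivalently, $-\overline s\notin(\tau+I)\ker(A-I)$, and since the solution set of the reversing congruence is exactly $\overline r_0+\ker(A-I)$, no adjustment of the kind you propose can succeed. The map $f_{A,\overline a}$ \emph{is} strongly reversible --- for instance $\bigl(\operatorname{diag}(1,-1),\overline 0\bigr)$ reverses it, and so does $\bigl(-\tau,(\tfrac78,\tfrac14)\bigr)$ --- so the corollary survives, but any correct proof must be allowed to \emph{change} the reverser $\tau$ (here $-\tau$ suffices), which your scheme of antisymmetrizing $\overline r_0$ for a fixed $\tau$ rules out. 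Note also that your normal-form check only treats the family $\begin{pmatrix}1&q\\0&-1\end{pmatrix}$; the failure occurs for the other family $-\begin{pmatrix}1&q\\0&-1\end{pmatrix}$ with $q$ odd and $m$ even.

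Where your expectation is provably right is the nondegenerate case $\det(A-I)\neq 0$: from $\tau(A-I)\tau=-A^{-1}(A-I)$ one gets $\tau(A-I)^{-1}=-(A-I)^{-1}A\tau$, and then, using $(A\tau)^2=I$ (which you established),
\[
(\tau+I)(A-I)^{-1}(A\tau+I)=(A-I)^{-1}\bigl(I-(A\tau)^2\bigr)=0,
\]
so $\overline r\equiv -(A-I)^{-1}(A\tau+I)\overline a_0\pmod{\mathbb Z^2}$ solves both congruences at once; every other solution differs by $\ker(A-I)$, so in this case $\overline s$ does lie in $(\tau+I)\ker(A-I)$, as you predicted. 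This also substantiates your closing remark for $1\notin\operatorname{spec}(A)$: the surjectivity of $A-I$ on $\mathbb T^2$ makes (2) vacuous, but it does not by itself produce an involutive lift --- the identity above is what does. The case that breaks your argument is precisely the eigenvalue-$1$ case (the only case in which (2) has content), where $A$ is $I$, an involution of determinant $-1$, or conjugate to $\begin{pmatrix}1&m\\0&1\end{pmatrix}$; a complete proof must handle these by normal forms, replacing a ``bad'' $\tau$ by an involutive reverser from the family $\begin{pmatrix}1&q\\0&-1\end{pmatrix}$, for which the reversing and involutivity congruences constrain different coordinates of $\overline r$ and are always simultaneously solvable. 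As written, your proposal proves necessity and the nondegenerate half of sufficiency, but the degenerate half rests on a false intermediate claim.
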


\subsection{Classification of Reversible Affine Toral Automorphisms}

Here we present a classification of reversibility for affine toral automorphisms in the next theorem. 

\begin{theorem}\label{revtoraut} 
Let $(A,\overline{a})$ be an element of the group $G$.  
Then the affine map $f_{A,\overline{a}}(\bar{x})=A\bar{x}+\overline{a}~(\text{mod}~\mathbb{Z}^2)$ is reversible or strongly reversible as follows:

\begin{enumerate}
    \item \textit{Elliptic or parabolic case:}  
    If $A$ is elliptic or parabolic, then every affine map $f_{A,\overline{a}}$ is strongly reversible.

    \item \textit{Hyperbolic case:}  
    If $A$ is hyperbolic, then $f_{A,\overline{a}}$ is \emph{reversible} if and only if one of the following conditions is satisfied:
    \begin{enumerate}
        \item $A$ is reversible in $GL(2,\mathbb{Z})$, i.e., there exists a matrix $R \in GL(2,\mathbb{Z})$ such that
        $$
        RAR^{-1}=A^{-1};
        $$
        \item the two fixed points of $A$ on $\mathbb{S}^{1}$ are reciprocal or symmetric;
        \item the image of $\bar{a}$ under the linear map $AR+I$ satisfies the following condition
        $$
        (AR+I)\overline{a} \in \operatorname{Im}(A-I).
        $$
    \end{enumerate}

    \item \textit{Strong reversibility:}  
    $f_{A,\overline{a}}$ is \emph{strongly reversible} if and only if there exists a matrix $R \in GL(2,\mathbb{Z})$ such that
    $$
    RAR^{-1}=A^{-1} \quad \text{and} \quad R^2=I
    $$
    $($i.e., an involutive reversing symmetry$)$, and the following condition is satisfied
    $$
    (AR+I)\overline{a} \in \operatorname{Im}(A-I).
    $$
\end{enumerate}
\end{theorem}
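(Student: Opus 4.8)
The plan is to deduce all three parts from the two structural results already proved, Theorem~\ref{thm:affine-reversible} and Corollary~\ref{5sr}, combined with the linear classifications of Theorem~\ref{GL2Z-reversible} and Theorem~\ref{GL2Z-strong} and the geometric criterion of Lemma~\ref{PGLrev}. The observation that organises the whole argument is the following dichotomy: whenever $1$ is not an eigenvalue of $A$, the matrix $A-I$ is invertible over $\mathbb{R}$, so $\operatorname{Im}(A-I)=\mathbb{T}^2$ and the translation-compatibility condition $(AR+I)\overline{a}\in\operatorname{Im}(A-I)$ holds vacuously for every $\overline{a}$. Thus affine reversibility reduces to the purely linear problem exactly when $1$ is not an eigenvalue, and the only genuine arithmetic obstruction can appear when $1$ is an eigenvalue. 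I would also record that (strong) reversibility is invariant under conjugation in $G$: conjugating a reverser (respectively a reversing involution) by a fixed $(C,\overline{c})$ yields another reverser (respectively involution). Since conjugation by $(C,0)$ with $C\in GL(2,\mathbb{Z})$ sends $(A,\overline{a})$ to $(CAC^{-1},C\overline{a})$ and $C\overline{a}$ ranges over all of $\mathbb{T}^2$ as $\overline{a}$ does, this lets me replace $A$ by any $GL(2,\mathbb{Z})$-conjugate without loss of generality.

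For part~(1), Theorem~\ref{GL2Z-strong}(1) already gives that every elliptic or parabolic $A$ is strongly reversible in $GL(2,\mathbb{Z})$, so an involutive reverser $\tau$ exists and the first hypothesis of Corollary~\ref{5sr} is met; it remains only to verify the compatibility $(A\tau+I)\overline{a}\in\operatorname{Im}(A-I)$ for every $\overline{a}$. If $A$ is elliptic then $\operatorname{tr}(A)\in\{-1,0,1\}$, and if $A$ is parabolic with $\operatorname{tr}(A)=-2$ then both eigenvalues equal $-1$; in either case $1$ is not an eigenvalue, $\operatorname{Im}(A-I)=\mathbb{T}^2$, and the compatibility is automatic, so Corollary~\ref{5sr} yields strong reversibility. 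The one remaining case is $A$ parabolic with $\operatorname{tr}(A)=2$ (including $A=I$), where I would conjugate $A$ to the normal form $\left(\begin{smallmatrix}1&n\\0&1\end{smallmatrix}\right)$ (using $(A-I)^2=0$, a primitive vector in $\ker(A-I)$, and extension to an integral basis) and then compute directly: with $\tau=\operatorname{diag}(1,-1)$ one gets $A\tau+I=\left(\begin{smallmatrix}2&-n\\0&0\end{smallmatrix}\right)$, whose image lies in the horizontal axis $\operatorname{Im}(A-I)=\operatorname{Im}\left(\begin{smallmatrix}0&n\\0&0\end{smallmatrix}\right)$, so the compatibility holds for all $\overline{a}$; for the degenerate value $A=I$ the reverser $\tau=-I$ gives $A\tau+I=0$ and the condition is trivial. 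Hence part~(1) holds in every case.

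For part~(2), I assume $A$ hyperbolic with $1$ not an eigenvalue (that is, $|\operatorname{tr}(A)|\ge 3$ when $\det A=1$, or $\operatorname{tr}(A)\neq 0$ when $\det A=-1$). Then Case~1 of Theorem~\ref{thm:affine-reversible} shows that $f_{A,\overline{a}}$ is reversible if and only if $A$ is reversible in $GL(2,\mathbb{Z})$, which is condition~(a); Lemma~\ref{PGLrev} reexpresses this linear reversibility as the statement that the two fixed points of $A$ on $\mathbb{S}^1$ form a reciprocal or symmetric pair, which is condition~(b), giving the equivalence of (a) and (b). Finally, since $1$ is not an eigenvalue, $\operatorname{Im}(A-I)=\mathbb{T}^2$ and condition~(c) holds automatically, imposing no extra restriction; assembling these three facts yields the stated characterisation. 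I would remark that in the boundary hyperbolic situation where $1$ is an eigenvalue (the involutions with $\det A=-1$ and $\operatorname{tr}(A)=0$) one must instead invoke Case~2 of Theorem~\ref{thm:affine-reversible}, where condition~(c) becomes a genuine congruence constraint rather than an automatic one.

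Part~(3) is a direct transcription of Corollary~\ref{5sr}: the affine map $f_{A,\overline{a}}$ is strongly reversible if and only if there is an $R\in GL(2,\mathbb{Z})$ with $R^2=I$ and $RAR^{-1}=A^{-1}$ (an involutive reverser, giving the first clause of the corollary) for which $(AR+I)\overline{a}\in\operatorname{Im}(A-I)$ (its second clause), which is precisely the assertion of part~(3). The main obstacle I anticipate is the parabolic case with $\operatorname{tr}(A)=2$ inside part~(1): there $1$ is an eigenvalue, $\operatorname{Im}(A-I)$ is only a one-dimensional subtorus, and the compatibility is no longer automatic. The resolution is the explicit normal-form computation above: choosing $\tau=\operatorname{diag}(1,-1)$ (noting incidentally that $A\tau$ is again an involution, since $(A\tau)^2=A(\tau A\tau^{-1})\tau^2=AA^{-1}\tau^2=I$) forces $\operatorname{Im}(A\tau+I)=\operatorname{Im}(A-I)$, so the obstruction vanishes for every $\overline{a}$; carrying out this verification, rather than the otherwise routine reductions, is where the genuine work lies.
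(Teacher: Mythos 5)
Your proposal is correct and follows essentially the same skeleton as the paper's proof: both deduce everything from Theorem~\ref{thm:affine-reversible} and Corollary~\ref{5sr}, both observe that $\operatorname{Im}(A-I)=\mathbb{T}^2$ whenever $1$ is not an eigenvalue (disposing of the elliptic case, the trace~$-2$ parabolic case, and condition~(c) in the generic hyperbolic case), both use Lemma~\ref{PGLrev} to identify conditions~(a) and~(b), and both settle the trace~$2$ parabolic case by the identical normal-form computation with $R=\operatorname{diag}(1,-1)$, where $\operatorname{Im}(AR+I)$ and $\operatorname{Im}(A-I)$ lie on the same horizontal circle. Where you genuinely improve on the paper: you make the conjugation-invariance of (strong) reversibility in $G$ explicit, you treat $A=\pm I$ separately, and, most substantively, you catch that the paper's hyperbolic argument opens with the assertion that hyperbolic matrices have no eigenvalue $\pm 1$, which is false for the involutions with $\det A=-1$ and $\operatorname{tr}(A)=0$ (hyperbolic under the paper's definition); the paper's proof silently skips this boundary case. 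However, your own handling of it is only a deferral, and your remark that condition~(c) there "becomes a genuine congruence constraint" is not quite right: for such $A$ one has $A^2=I$, so $R=-I$ is an involutive reverser with $(AR+I)\overline{a}=-(A-I)\overline{a}\in\operatorname{Im}(A-I)$ automatically, whence $f_{A,\overline{a}}$ is strongly reversible for \emph{every} $\overline{a}$ — consistent with condition~(a) holding trivially for involutions. Adding that one line would close the case you flagged and would make your argument strictly more complete than the paper's.
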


\begin{proof}
The characterisation of reversibility in the affine group is provided by
Theorem \ref{thm:affine-reversible}: the affine map $f_{A,\overline{a}}$ is reversible if and only if there exists
$(R,\overline{r})\in G$ such that
\begin{equation}\label{eq:affrev}
RAR^{-1}=A^{-1}, \qquad (AR+I)\overline{a}+(A-I)\overline{r}\equiv 0 \pmod{\mathbb Z^2}.
\end{equation}
Strong reversibility is characterised by Corollary \ref{5sr}, which states that if
$R^2=I$ then $f_{A,\overline{a}}$ is strongly reversible if and only if
\begin{equation}\label{strong}
(AR+I)\overline{a}\in \operatorname{Im}(A-I).
\end{equation}

We treat the three cases separately.

\subsubsection*{\textbf{Elliptic $A$}.}
In the elliptic case, $A$ has no eigenvalue $1$.
Since every elliptic matrix in $GL(2,\mathbb Z)$ is strongly reversible by the classification in \cite{IS}, we may choose $R\in GL(2,\mathbb Z)$ with $RAR^{-1}=A^{-1}$ and $R^2=I$.
Because $A-I$ is surjective, the congruence~\eqref{strong} always has a
solution $\overline{r}\in\mathbb T^2$.  
Hence every affine map with elliptic linear part is strongly reversible.

\subsubsection*{\textbf{Parabolic $A$}.}
From \cite{IS}, every parabolic element of $GL(2,\mathbb Z)$ is strongly reversible (and hence reversible) and conjugate to one of the forms $\begin{pmatrix}
1 & m\\
0 & 1
\end{pmatrix}$ or $\begin{pmatrix}
-1 & -m\\
0 & -1
\end{pmatrix},$ where $m\in\mathbb{N}.$ If $A$ is conjugate to $\begin{pmatrix}
-1 & -m\\
0 & -1
\end{pmatrix}$, strong reversibility of the map $f_{A,\overline{a}}$ follows the same argument as in the elliptic case.

If $A$ is conjugate to $\begin{pmatrix}
1 & m\\
0 & 1
\end{pmatrix}$, then we can choose $R=\begin{pmatrix}
1 & 0\\
0 & -1
\end{pmatrix}$. In this case, $\text{Im}(AR+I)$ and $\text{Im}(A-I)$ will generate the same subspace of $\mathbb{R}^2$. Therefore, congruence~\eqref{strong} will be satisfied by the matrix $R.$

\subsubsection*{\textbf{Hyperbolic $A$}.}
Since $A$ is hyperbolic, it has no eigenvalue $\pm1$, and therefore $A-I$ and $A+I$ are invertible over $\mathbb Q$.  In particular,
$A-I:\mathbb T^2\to\mathbb T^2$ is surjective, so the congruence
\eqref{eq:affrev} is solvable in $\overline{r}$ if and only if
$$
(AR+I)\overline{a}\in\operatorname{Im}(A-I).
$$

By Corollary~7.29 of \cite{IS} and Theorem~\ref{PGLrev}, a hyperbolic matrix
$A\in GL(2,\mathbb Z)$ is reversible in $GL(2,\mathbb Z)$ if and only if its two
fixed points on $\mathbb S^1$ are reciprocal or symmetric.  Thus, $f_{A,\overline{a}}$ is reversible if
and only if one of the following conditions is satisfied:
\begin{enumerate}
\item[(i)] $A$ is reversible in $GL(2,\mathbb Z),$

\item[(ii)] The fixed points of $A$ on $\mathbb{S}^1$ are reciprocal
or symmetric, and
\item[(iii)] $(AR+I)\overline{a}\in\mathrm{Im}(A-I)$.
\end{enumerate}
This establishes part~(2).

 For strong reversibility of $f_{A,\overline{a}}$~, we need to find an involution $R\in GL(2,\mathbb Z)$ satisfying $RAR^{-1}=A^{-1}$ and the condition~\eqref{strong}.  As noted above, for hyperbolic $A$ the induced map
$A-I$ is surjective on $\mathbb T^2$, so~\eqref{strong} is equivalent to the fact that there exists $\bar{x}\in\mathbb T^2 $ such that $$(AR+I)\overline{a} - (A-I)\bar{x} \in \mathbb Z^2.$$
Thus, strong reversibility holds when $A$ is strongly reversible in
$GL(2,\mathbb Z)$ and the above condition  is satisfied. This proves~(3).

Taking all all cases together, the theorem follows.
\end{proof}

\begin{remark} For hyperbolic $A$, the images of $A-I$ and $A+I$ coincide as subspaces of $\mathbb{R}^2$, since the eigenvalues $\lambda$ satisfy $\lambda \neq \pm 1$.  
Hence, one can write the condition in the form
$$
(AR+I)\overline{a} \in \operatorname{Im}(A+I),
$$
which is equivalent in the hyperbolic case. 
\end{remark}
\subsection{Cohomological interpretation of the reversibility condition}\label{coho}  \;\

\medskip The congruence
\[
(AR+I)\bar{a} + (A-I)\bar{r} \equiv \bar{0} \pmod{\mathbb{Z}^2},
\]
which characterizes reversibility of the affine map $f_{A,\bar{a}}$, admits a natural
dynamical interpretation. It may be viewed as a linear cohomological equation (in the sense of classical Livšic theory \cite{L})
over the toral automorphism $\bar{x} \mapsto A\bar{x}$, where the unknown $\bar{r}$ plays the role
of a transfer function and the term $(AR+I)\bar{a}$ represents a cocycle twisted by
the reversing symmetry $R$.

From this perspective, reversibility of $f_{A,\bar{a}}$ is equivalent to the absence of a cohomological obstruction: the translation component $\bar{a}$ must be cohomologous, via $\bar{r}$, to its image under the reversing symmetry. When $1$ is not an eigenvalue of $A$, the operator $A-I$ is surjective on $\mathbb{T}^2$, and the cohomological equation always admits a solution. This explains why in this case reversibility of the affine map reduces to reversibility of its linear part. In contrast, in the hyperbolic case the solvability of the cohomological equation imposes a genuine arithmetic restriction on the translation, which appears in Theorem~\ref{revtoraut} as the lifting condition
$(AR+I) \bar a \in \operatorname{Im}(A-I)$.

Thus, Theorem~\ref{revtoraut} provides a complete classification of
reversibility for affine toral automorphisms: elliptic and parabolic cases
do not have any cohomological obstruction, while hyperbolic dynamics carry a 
nontrivial one.

\section*{Acknowledgements}

The first author was supported partially by the Department of Science and Technology (DST), Govt.~of India, under the Scheme “Fund for Improvement of S\&T Infrastructure” (FIST) [File No.~SR/FST/MS-I/2019/41]. The research of the second author was funded by UGC [NTA Ref.~No.~201610319430], Govt.~of India. The third author acknowledges the ANRF  research grant ANRF/ARGM/2025/000122/MTR. 

%During the preparation of this work, the authors used the AI-based tool (ChatGPT) to generate the figures. After using this tool, the authors reviewed and edited the content as needed and take full responsibility for the content of the publication.

\end{document}